\documentclass[12pt]{article}
\usepackage{amsmath,latexsym,amsthm}
\usepackage[dvips]{graphicx}
\usepackage{graphics}
\usepackage{amssymb}
\usepackage{color}
\usepackage{textcomp}
\usepackage{amsfonts}
\usepackage{amssymb}
\usepackage{longtable}
\usepackage{fancyhdr}
\usepackage[T2A]{fontenc}
\usepackage[utf8]{inputenc}

\newtheorem{lemma}{Lemma}
\newtheorem{theorem}{Theorem}

\makeatletter

\begin{document}

\begin{center}
{\Large\bf The Method of Potentials for the Airy Equation of Fractional Order}
\end{center}

\begin{center}
{\sc  Rakhimov~Kamoladdin~Urinbayevich}\\
{\it National University of Uzbekistan, Tashkent, Uzbekistan\\
}
e-mail: {\tt kamoliddin\_ru@inbox.ru }

\end{center}

\begin{abstract}
In this work, initial-boundary value problems for the time-fractional Airy equation are considered on different intervals. We studied the properties of potentials for this equation and, using these properties, found solutions to the considered problems. The uniqueness of the solution to the problem is proved using an analogue of the Gronwall–Bellman inequality and an a priori estimate.

\emph{\textbf{Keywords:} Time-fractional Airy equation, IBVP, fundamental solutions, integral representation.}

\emph{\textbf{Mathematics Subject Classification (2020):} 35R11, 35A08, 35A09, 35C15.}
\end{abstract}

\section*{Introduction}
In this paper we investigate initial-boundary value problems (IBVP) for time-fractional third order differential equation with multiple characteristics on the line. In literatures the analogue of this equation with whole order time derivative is also called the linearized Korteweg - de Vries equation \cite{Maqsad3,Calvante1,Maqsad2,Seifert1} or Airy equation \cite{Noja1}. Motivation of such study comes from wide range of applications of such IBVP on the modeling and simulation of the problems in physics \cite{Hilfer1}, mechanics \cite{Mainardi1}, biology and other fields of applied sciences \cite{Kilbas1,Metzler,Phys1,Phys2,Phys3}.

The theory of potentials for classical Airy equations considered in \cite{Jurayev,LC01}. They constructed the fundamental solutions of the equation and investigated some properties of the potentials with respect to these solutions. Using the properties of the potentials they found solutions of the some IVBP for the classical Airy equation. The fundamental solution of time-fractional Airy equation is constructed in \cite{AVP02}. In this paper they investigated some properties of potentials and give the solution of the Cauchy problem.

In \cite{Jurayev} constructed the exact solution of the problems for classical Airy equation on metric lines. They find the solution by using potential method. In \cite{AVP02} A.V. Pskhu investigated third order differential equation with fractional derivative and constructed a fundamental solution. He also solved the Cauchy problem for this equation in this work. The Cauchy problem for time fractional Airy equation on star-shaped graph with semi infinite bonds is considered in \cite{Bulletin}. In this work they constructed one fundamental solution of the equation in addition to one that constructed in \cite{AVP02}. Also, some additional properties of the potential are investigated and using this results the exact solution of the problem is constructed.

In this paper we consider three initial-boundary value problem on finite and semi-infinite intervals for the linearized Korteweg–de Vries equation with time-fractional derivative. To prove the existence and uniqueness of the solution we will use method of potentials and the method of energy integrals. Furthermore, we need to investigate some new properties of the potensials of the time-fractional Airy equation.

\section{Formulation of problems and some properties of potentials}

\subsection{Formulation of the problems}
The operator
\begin{equation}\label{hosila}
\partial_{0t}^{\alpha }f\left( t \right):=\frac{1}{\Gamma \left( 1-\alpha  \right)}\int\limits_{0}^{t}{\frac{f'\left( \tau  \right)}{{{\left( t-\tau  \right)}^{\alpha }}}d\tau };\,\,\,0<\alpha <1
\end{equation} is called Caputo fractional derivative. And inverse of this operator
\begin{equation}\label{integral}
J_{0,t}^{\alpha }f\left( t \right):=\frac{1}{\Gamma \left( \alpha  \right)}\int\limits_{0}^{t}{\frac{f\left( \tau  \right)}{{{\left( t-\tau  \right)}^{1-\alpha }}}d\tau };\,\,\,0<\alpha <1
\end{equation} is called fractional integral  \cite{AVP01}.

We consider the equation with a partial time derivative
\begin{equation}\label{tenglama}	
\partial_{0t}^{\alpha }u-\frac{{{\partial }^{3}}u}{\partial {{x}^{3}}}=f\left( x,t \right)
\end{equation} 	
on the different domains.

Put $E_1=\{(x,t): 0< x< 1, 0< t\leq T\}$,  $E_2=\{(x,t): 0< x<\infty, 0< t\leq T\}$,  $E_3=\{(x,t): -\infty<x< 0, 0< t\leq T\}$ with $T>0$.

\textbf{Problem 1.}
To find the solution $u(x,t)$ of the equation (\ref{tenglama}) from the following class of functions: $W_1=\left\{ u(x,t):u(x,t)\in C^{2,1}(E_1), u(x,t)\in C(\overline{E_1}), u_x\in C([0,1)\times (0,T])\right\}$, that satisfies the following initial condition
\begin{equation}\label{boshshart}
u\left( x,0 \right)=0,
\end{equation}
and boundary conditions
\begin{equation}\label{chegshart}
u\left( 0,t \right)={{\varphi }_{1}}\left( t \right);\,\,\frac{\partial u}{\partial x}( 0,t)={{\varphi }_{2}}\left( t \right)\,\,
\end{equation}
and
\begin{equation}\label{shart}
u\left( 1,t \right)={{\varphi }_{3}}\left( t \right).
\end{equation}

\textbf{Problem 2.}
To find the solution $u(x,t)$ of the equation (\ref{tenglama}) from the following class of functions: $W_2=\left\{ u(x,t):u(x,t)\in C^{2,1}(E_2), u_x (x,t)\in C(\overline{E_2}),\right\}$, that satisfies the following initial condition
\begin{equation}\label{boshshart1}
u\left( x,0 \right)=0,
\end{equation}
and following boundary conditions
\begin{equation}\label{chegshart1}
u(0,t)=\psi_{1}(t);\,\,\frac{\partial u}{\partial x}(0,t)=\psi_{2}(t).
\end{equation}

\textbf{Problem 3.}
Another problem is to find the solution $u(x,t)$ of the equation (\ref{tenglama}) from the following class of functions: $W_3=\left\{ u(x,t):u(x,t)\in C^{2,1}(E_3), u_{xx}(x,t)\in C(\overline{E_3}) \right\}$, that satisfies the following initial condition
\begin{equation}\label{boshshart2}
u\left( x,0 \right)=0,
\end{equation}
and following boundary condition
\begin{equation}\label{chegshart2}
\frac{\partial^2 u}{\partial x^2}(0,t)=\psi(t).
\end{equation}

\subsection{Potential theory}
The function
\begin{equation}\label{Wright}
\phi (\rho ,\mu ;z)=\sum\limits_{n=0}^{\infty }{\frac{{{z}^{n}}}{n!\Gamma (\rho n+\mu )}},\,\,\,\,-1<\rho <0\,,\,\mu \in \mathbb{C}
\end{equation}
is called Wright function \cite{Gorenflo}.
Wright function can be represented as
\begin{equation} \label{Wright2} \phi(\lambda,\mu;z)=\frac{1}{2\pi i}\int \limits_{Ha}^{}e^{\sigma+z\sigma^{-\lambda }}\frac{d\sigma}{\sigma^\mu},  \lambda>-1,   \mu \in \mathbb{C}\end{equation}
where the integral is taken along the Hankel contour $Ha$, i.e. a loop which starts and ends at $-\infty$ and encircles the circular disk $|\zeta|\leq|z|^{1/\omega}$ in the positive sense: $-\pi\leq arg\zeta \leq\pi$ on $Ha$ \cite{Mainardi1}.

The following auxiliary M-Wright functions are introduced \cite{FM01} \begin{equation} \label{MWright1}
F_\nu(z):=\phi(-\nu,0;-z),  0<\nu<1,
\end{equation}
\begin{equation} \label{MWright2}
M_\nu(z):=\phi(-\nu,1-\nu;-z),  0<\nu<1.
\end{equation}
They are related with following relation \begin{equation} \label{MWright3}
F_\nu(z)=\nu zM_\nu(z).
\end{equation}

 We have following estimate \cite{AVP02}
\begin{equation} \label{baho}
\left| \phi \left( -\delta ,\varepsilon ;z \right) \right|\le C\exp \left( -\nu {{\left| z \right|}^{\frac{1}{1-\delta }}} \right),\,\,\,C=C\left( \delta ,\varepsilon ,\nu  \right)
\end{equation}
 where $\nu <\left( 1-\delta  \right){{\delta }^{\frac{\delta }{1-\delta }}}\cos \frac{\pi -\left| \arg z \right|}{1-\delta },\,\,\frac{1+\delta }{2}\pi <\left| \arg z \right|\le \pi$.

For constructing the solution of the problem we need to get a special functions, which called fundamental solutions.  The fundamental solutions were founded in the following forms \cite{Bulletin, AVP02}
\begin{equation} \label{fundamental1}
G_{\alpha}^{2\alpha/3}(x,t)=\frac{1}{3t^{1-2\alpha/3}}
    \begin{cases}
  & \phi (-\frac{\alpha }{3},\frac{2\alpha }{3};\frac{x}{{{t}^{\alpha /3}}});\,\,\,\,\,\,\,\,x<0 \\
 & -2\operatorname{Re}[{{e}^{2\pi i/3}}\phi (-\frac{\alpha }{3},\frac{2\alpha }{3};{{e}^{2\pi i/3}}\frac{x}{{{t}^{\alpha /3}}})];\,\,\,\,x>0 \\
    \end{cases}
\end{equation}
and
\begin{equation} \label{fundamental2}
V_{\alpha }^{2\alpha /3}(x,t)=\frac{1}{3{{t}^{1-2\alpha /3}}}\operatorname{Im}[{{e}^{2\pi i/3}}\phi (-\frac{\alpha }{3},\frac{2\alpha }{3};{{e}^{2\pi i/3}}\frac{x}{{{t}^{\alpha /3}}})];\,\,\,\,x>0.
\end{equation}

Based on solutions (\ref{fundamental1}) and (\ref{fundamental2}), we construct the potentials and study their properties.

\begin{lemma}
Let $\tau_k \in C[0,T]$, $k=1,2$. Define
\[
w_1(x,t)=\int_0^t G_\alpha^{2\alpha/3}(x-a,t-\eta)\,\tau_1(\eta)\,d\eta,
\]
\[
w_2(x,t)=\int_0^t V_\alpha^{2\alpha/3}(x-a,t-\eta)\,\tau_2(\eta)\,d\eta.
\]
Then:

(i) The functions $w_k(x,t)$, $k=1,2$, satisfy the homogeneous equation
\[
\partial_{0t}^{\alpha} w_k(x,t)-\frac{\partial^3 w_k(x,t)}{\partial x^3}=0,
\qquad x\neq a,\ t>0.
\]

(ii) For all $x\in\mathbb R$,
\[
\lim_{t\to 0+} w_k(x,t)=0,\qquad k=1,2.
\]
\end{lemma}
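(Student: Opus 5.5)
We plan to establish, first, that $G=G_\alpha^{2\alpha/3}$ and $V=V_\alpha^{2\alpha/3}$ themselves solve the homogeneous equation away from $x=0$ for $t>0$ and have suitable small-time behaviour. We then transfer these properties to the time convolutions $w_k$. Since $G$ and $V$ are built from the Wright function of argument $z=\zeta x t^{-\alpha/3}$ with $\zeta\in\{1,e^{2\pi i/3}\}$, we use the Hankel representation (\ref{Wright2}). For $t>0$ it gives
\[
\frac{1}{t^{1-2\alpha/3}}\phi\Bigl(-\tfrac{\alpha}{3},\tfrac{2\alpha}{3};\zeta x t^{-\alpha/3}\Bigr)=\frac{1}{2\pi i}\int_{Ha}e^{st+\zeta x s^{\alpha/3}}s^{-2\alpha/3}\,ds ,
\]
after the substitution $\sigma=st$. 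This is the inverse Laplace transform of $s^{-2\alpha/3}e^{\zeta x s^{\alpha/3}}$. Differentiating under the integral sign, $\partial_x^3$ produces the factor $\zeta^3 s^{\alpha}=s^\alpha$, because $\zeta^3=1$ for both choices of $\zeta$. On the other hand, the Riemann--Liouville derivative of order $\alpha$ corresponds to multiplication by $s^\alpha$. Since each kernel tends to $0$ as $t\to0+$ for fixed $x\ne0$, the Caputo and Riemann--Liouville derivatives coincide on it. Hence every branch $t^{2\alpha/3-1}\phi(\dots;\zeta x t^{-\alpha/3})$ solves the homogeneous equation for $x\neq0$. Taking real and imaginary parts with real coefficients preserves this, so $G$ and $V$ solve it for $x\ne0$, $t>0$.

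Next we need the behaviour of the kernels as $t\to0+$ with $x\neq0$ fixed. We apply estimate (\ref{baho}) with $\delta=\alpha/3$. For $x<0$ with $\zeta=1$, we have $\arg z=\pi$. For $x>0$ with $\zeta=e^{2\pi i/3}$, we have $|\arg z|=2\pi/3$, which exceeds $\frac{1+\alpha/3}{2}\pi$ because $\alpha<1$. In both cases
\[
|G(x,t)|+|V(x,t)|\le C t^{2\alpha/3-1}\exp\bigl(-\nu |x|^{3/(3-\alpha)}t^{-\alpha/(3-\alpha)}\bigr),
\]
and the same bound, with extra polynomial factors in $t^{-1}$ and $|x|$, holds for the $x$-derivatives up to third order and for $\partial_t$. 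This follows from the derivative formula $\frac{d}{dz}\phi(\rho,\mu;z)=\phi(\rho,\rho+\mu;z)$ and (\ref{baho}) applied to the shifted parameters. Consequently all these functions are continuous on $\{x\ne0\}\times[0,T]$, they vanish at $t=0$, and they are bounded for $|x|\ge\varepsilon$.

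For (i), fix $x\neq a$. By the bounds above, $w_k$ may be differentiated three times in $x$ under the integral, so $\partial_x^3w_k=\int_0^t \partial_x^3K(x-a,t-\eta)\tau_k(\eta)\,d\eta$, with $K=G$ or $V$. For the fractional derivative we write $w_k=K(x-a,\cdot)*\tau_k$ as a Laplace convolution in $t$. Since $K(x-a,0)=0$, the derivative satisfies $\partial_t w_k=\partial_tK*\tau_k$. Applying $J^{1-\alpha}$ and using associativity of convolutions gives $\partial^\alpha_{0t}w_k=(\partial^\alpha_{0t}K)*\tau_k$. Combining with the first paragraph yields (i).

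The main obstacle is to justify these interchanges rigorously near $\eta=t$. There the kernel and its derivatives are controlled only via the exponential factor, and that control is uniform only for $|x-a|\ge\varepsilon$. This is exactly why (i) is claimed only for $x\ne a$. For (ii), we use the integrability bound
\[
|w_k(x,t)|\le \|\tau_k\|\int_0^t|K(x-a,s)|\,ds\le C\|\tau_k\|\int_0^t s^{2\alpha/3-1}ds=C't^{2\alpha/3}\to0 .
\]
Here we use $|\phi|\le C$ on the relevant rays. This bound holds even at $x=a$, since $2\alpha/3-1>-1$, so (ii) holds for all $x\in\mathbb R$.
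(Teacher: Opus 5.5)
Your proof is correct. It follows the same skeleton as the paper: the kernel solves the equation off $x=0$, the operators pass through the time integral, and (ii) comes from bounding $\int_0^t|K|\,ds$. You justify the two key steps differently, though.

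\textbf{Kernel identity.} The paper simply cites Pskhu's relations $\partial_{0t}^{\alpha}G_\alpha^{\mu}=G_\alpha^{\mu-\alpha}=\partial_x^3G_\alpha^{\mu}$ and asserts that $V_\alpha^{2\alpha/3}$ ``has the same differential properties.'' You derive the identity from the Hankel representation instead. Each branch $t^{2\alpha/3-1}\phi(-\frac{\alpha}{3},\frac{2\alpha}{3};\zeta x t^{-\alpha/3})$ is the inverse Laplace transform of $s^{-2\alpha/3}e^{\zeta x s^{\alpha/3}}$, and $\zeta^3=1$ makes $\partial_x^3$ and $\partial_{0t}^{\alpha}$ both act as multiplication by $s^{\alpha}$. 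This explains at once why both $G$ and $V$ work. The coefficient $e^{2\pi i/3}$ is complex, not real as you say, but complex multiples of solutions are still solutions, so nothing changes.

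\textbf{Interchange of derivative and integral.} The paper appeals only to ``local integrability,'' which glosses over the boundary term at $\eta=t$ when the Caputo derivative is moved inside a Volterra integral. Your argument supplies exactly the missing justification. For $x\neq a$ you have $K(x-a,0)=0$, and $\partial_tK(x-a,\cdot)$ is bounded by (\ref{baho}) applied with shifted parameters. Hence $\partial_t w_k=(\partial_tK)*\tau_k$, and $\partial_{0t}^{\alpha}w_k=(\partial_{0t}^{\alpha}K)*\tau_k$ by associativity of convolution. This also makes transparent why $x\neq a$ is required.

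\textbf{Part (ii).} You replace the paper's case split ($x\neq a$ versus $x=a$) with the single bound $|K(y,s)|\le Cs^{2\alpha/3-1}$, valid for all $y$. This gives $|w_k(x,t)|\le Ct^{2\alpha/3}$ uniformly in $x$, which is slightly stronger than the paper's pointwise statement.

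\textbf{Comparison and one caveat.} The paper's route is shorter but leans on external identities; yours is self-contained. One caveat is shared by the statement, the paper and your write-up: $V_\alpha^{2\alpha/3}$ is defined only for $x>0$. So the assertions about $w_2$, including your bound on $|G|+|V|$ ``in both cases,'' should be read for $x\ge a$ only.
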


\begin{proof}
We prove the statement for $w_1$; the proof for $w_2$ is completely analogous, since the kernel $V_\alpha^{2\alpha/3}(x,t)$ possesses the same differential properties as $G_\alpha^{2\alpha/3}(x,t)$.

\medskip
\noindent

It is known (see \cite{AVP02}) that the kernel $G_\alpha^{\mu}(x,t)$ satisfies the identities
\begin{equation}\label{eqg1}
\partial_{0t}^{\alpha} G_\alpha^{\mu}(x,t)=G_\alpha^{\mu-\alpha}(x,t),    
\end{equation}
and 
\begin{equation}\label{eqg2}
\frac{\partial^3}{\partial x^3} G_\alpha^{\mu}(x,t)=G_\alpha^{\mu-\alpha}(x,t),    
\end{equation}
for $x\neq 0$, $t>0$. In particular, for $\mu=2\alpha/3$ we obtain
\[
\partial_{0t}^{\alpha} G_\alpha^{2\alpha/3}(x,t)
-
\frac{\partial^3}{\partial x^3} G_\alpha^{2\alpha/3}(x,t)=0,
\qquad x\neq 0,\ t>0.
\]

Since $\tau_1$ is continuous and bounded on $[0,T]$, and the kernel $G_\alpha^{2\alpha/3}(x,t)$ together with its required derivatives is locally integrable on $(0,T]$, differentiation under the integral sign is justified. Hence,
\[
\partial_{0t}^{\alpha} w_1(x,t)
=
\int_0^t \partial_{0t}^{\alpha}
G_\alpha^{2\alpha/3}(x-a,t-\eta)\,\tau_1(\eta)\,d\eta,
\]
and
\[
\frac{\partial^3 w_1(x,t)}{\partial x^3}
=
\int_0^t \frac{\partial^3}{\partial x^3}
G_\alpha^{2\alpha/3}(x-a,t-\eta)\,\tau_1(\eta)\,d\eta.
\]
Subtracting these relations and using the above differential identity for the kernel yields
\[
\partial_{0t}^{\alpha} w_1(x,t)
-
\frac{\partial^3 w_1(x,t)}{\partial x^3}
=0,
\qquad x\neq a,\ t>0.
\]

\medskip
\noindent

Since $\tau_1$ is bounded, we have
\[
|w_1(x,t)|
\le
\|\tau_1\|_{C[0,T]}
\int_0^t
\bigl|G_\alpha^{2\alpha/3}(x-a,s)\bigr|\,ds,
\qquad s=t-\eta.
\]

If $x\neq a$, then by the exponential-type estimate for the Wright function (see \cite{AVP02}),
\[
|G_\alpha^{2\alpha/3}(x-a,s)|
\le
C\, s^{-(1-2\alpha/3)}
\exp\!\left(-\nu\,\Bigl|\frac{x-a}{s^{\alpha/3}}\Bigr|^{\frac{1}{1-\alpha/3}}\right),
\]
which implies
\[
\int_0^t |G_\alpha^{2\alpha/3}(x-a,s)|\,ds \to 0
\quad \text{as } t\to 0+.
\]

If $x=a$, then
\[
|G_\alpha^{2\alpha/3}(0,s)|\le C s^{-(1-2\alpha/3)},
\]
and therefore
\[
\int_0^t s^{-(1-2\alpha/3)}\,ds
=
C\, t^{2\alpha/3}\to 0,
\qquad t\to 0+.
\]

Hence,
\[
\lim_{t\to 0+} w_1(x,t)=0
\]
for all $x\in\mathbb R$.

The proof for $w_2$ is identical.

\end{proof}

\medskip

\begin{lemma}
Let
\[
w_3(x,t)=\int_0^t \frac{\partial^2}{\partial x^2}
G_\alpha^{2\alpha/3}(x-a,t-\eta)\,\tau_3(\eta)\,d\eta
\]
and
\[
w_4(x,t)=\int_0^t \frac{\partial^2}{\partial x^2}
V_\alpha^{2\alpha/3}(x-a,t-\eta)\,\tau_4(\eta)\,d\eta,
\]
where $\tau_3,\tau_4\in C[0,T]$ are bounded. Then, for every fixed $t\in(0,T]$,
\[
\lim_{x\to a-0} w_3(x,t)=\frac13\,\tau_3(t),
\]
\[
\lim_{x\to a+0} w_3(x,t)=-\frac23\,\tau_3(t),
\]
and
\[
\lim_{x\to a+0} w_4(x,t)=0.
\]
\end{lemma}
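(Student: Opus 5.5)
The plan is to reduce each potential to an integral in a self-similar variable, split off the value $\tau(t)$, and evaluate the resulting constant through Mellin-transform information on the Wright function.

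\emph{Step 1: the kernels.} From the series (\ref{Wright}) we have $\frac{d}{dz}\phi(\rho,\mu;z)=\phi(\rho,\mu+\rho;z)$. Each $x$-derivative of $\phi(-\frac{\alpha}{3},\mu;\omega x t^{-\alpha/3})$ therefore produces a factor $\omega t^{-\alpha/3}$ and lowers $\mu$ by $\alpha/3$. Applying this twice to (\ref{fundamental1}), (\ref{fundamental2}) gives
\[
\partial_x^2 G_\alpha^{2\alpha/3}(x,t)=\frac{1}{3t}
\begin{cases}
\phi(-\frac{\alpha}{3},0;\frac{x}{t^{\alpha/3}}), & x<0,\\
-2\operatorname{Re}\,\phi(-\frac{\alpha}{3},0;e^{2\pi i/3}\frac{x}{t^{\alpha/3}}), & x>0,
\end{cases}
\]
since $e^{2\pi i/3}\cdot(e^{2\pi i/3})^2=1$. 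In the same way,
\[
\partial_x^2 V_\alpha^{2\alpha/3}(x,t)=\frac{1}{3t}\operatorname{Im}\,\phi\Bigl(-\frac{\alpha}{3},0;e^{2\pi i/3}\frac{x}{t^{\alpha/3}}\Bigr).
\]
The kernel is thus $t^{-1}$ times a function of $z=|x|t^{-\alpha/3}$, and $\phi(-\nu,0;\cdot)$ vanishes at the origin because $1/\Gamma(0)=0$.

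\emph{Step 2: splitting.} Write $\tau_3(\eta)=\tau_3(t)+(\tau_3(\eta)-\tau_3(t))$. For the main term, substitute $s=t-\eta$ and $z=|x-a|s^{-\alpha/3}$, so that $ds/s=-\frac{3}{\alpha}\,dz/z$. For $x<a$ this gives
\[
\tau_3(t)\,\frac{1}{\alpha}\int_{|x-a|t^{-\alpha/3}}^{\infty}\phi\Bigl(-\frac{\alpha}{3},0;-z\Bigr)\frac{dz}{z}.
\]
As $x\to a-0$ the lower limit tends to $0$. With $\nu=\alpha/3$, relation (\ref{MWright3}) gives $\phi(-\nu,0;-z)/z=\nu M_\nu(z)$. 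Using the normalization $\int_0^\infty M_\nu=1$ (the Mellin formula below at $s=1$, $\mu=1-\nu$), the limit equals $\frac{1}{\alpha}\cdot\frac{\alpha}{3}=\frac13$.

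For the remainder term, fix $\varepsilon>0$ and choose $\delta$ with $|\tau_3(\eta)-\tau_3(t)|<\varepsilon$ for $t-\eta<\delta$. On $s<\delta$ the contribution is bounded by $\varepsilon\cdot\frac{1}{\alpha}\int_0^\infty|\phi(-\nu,0;-z)|\,dz/z$, which is finite because $\phi(-\nu,0;-z)=O(z)$ at $0$ and by (\ref{baho}) at $\infty$. On $s\ge\delta$ the kernel is $O(s^{-1}|x-a|)$ uniformly, so this part tends to $0$ as $x\to a$.

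\emph{Step 3: the side $x>a$.} The same substitution reduces the problem to computing
\[
I(\theta)=\int_0^\infty\phi(-\nu,0;e^{i\theta}z)\frac{dz}{z},\qquad \theta=\frac{2\pi}{3},
\]
and then the limits are $\frac{1}{\alpha}\bigl(-2\operatorname{Re} I\bigr)\tau_3(t)$ for $w_3$ and $\frac{1}{\alpha}\operatorname{Im} I\cdot\tau_4(t)$ for $w_4$. I would use the Mellin transform
\[
\int_0^\infty z^{s-1}\phi(-\nu,\mu;-z)\,dz=\frac{\Gamma(s)}{\Gamma(\mu+\nu s)}.
\]
Continuing it analytically in the direction of the ray gives $\int_0^\infty z^{s-1}\phi(-\nu,0;e^{i\theta}z)dz=e^{i(\pi-\theta)s}\,\Gamma(s)/\Gamma(\nu s)$. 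Letting $s\to0+$, and using $\Gamma(s)/\Gamma(\nu s)\to\nu$, yields $I(\theta)=\nu$ for every admissible $\theta$. Hence $I(2\pi/3)=\alpha/3$, which is real. This gives the limit $-\frac{2}{3}\tau_3(t)$ for $w_3$ and $0$ for $w_4$, and the remainder terms are handled exactly as in Step 2.

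\emph{Main obstacle.} The direction $\arg(e^{2\pi i/3}z)=2\pi/3$ is precisely the boundary angle $\frac{1+\delta}{2}\pi$ of estimate (\ref{baho}) when $\delta=1/3$. So neither the absolute convergence of $I(\theta)$ nor the rotation of the contour in the Mellin integral is immediate. I would first try to handle this directly from the Hankel representation (\ref{Wright2}): on that ray the exponent $e^{2\pi i/3}z\sigma^{\nu}$ becomes oscillatory rather than decaying, so one can integrate by parts in $z$ or deform $Ha$ slightly. If that fails, I would introduce an Abel factor $z^{s}$ (or $e^{-\epsilon z}$), justify convergence for small $s>0$, and pass to the limit $s\to0+$. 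The same uniform bound is what the dominated-convergence argument for the remainder term needs on the side $x>a$.
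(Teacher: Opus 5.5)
Your argument is correct and follows the paper's proof: the self-similar substitution $u=|x-a|s^{-\alpha/3}$, splitting off $\tau(t)$, the M-Wright normalization via (\ref{MWright3}) for $x<a$, and the value $\int_0^\infty\phi(-\frac{\alpha}{3},0;e^{2\pi i/3}u)\,\frac{du}{u}=\frac{\alpha}{3}$ for $x>a$. Your Mellin/contour-rotation derivation of that last identity, together with your remark that $\phi(-\frac{\alpha}{3},0;z)=O(z)$ at $0$, in fact supplies steps the paper only asserts.

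The ``main obstacle'' you flag is a false alarm. In (\ref{baho}) the relevant parameter is $\delta=\alpha/3<1/3$, so $\frac{1+\delta}{2}\pi<\frac{2\pi}{3}$. Hence the ray $\arg\zeta=\frac{2\pi}{3}$ lies strictly inside the sector of exponential decay, uniformly on $\frac{2\pi}{3}\le|\arg\zeta|\le\pi$. This makes the absolute convergence of $I(\frac{2\pi}{3})$, the contour rotation, and the dominated-convergence bound for $x>a$ immediate; the ray sits on the boundary only in the classical case $\alpha=1$.
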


\begin{proof}
We use the differentiation formula for the Wright function \cite{AVP02}
\begin{equation}\nonumber
\partial^{\nu}_{0t}
\left(
t^{\mu-1}
\phi\!\left(-\frac{\alpha}{3},\mu;\lambda t^{-\alpha/3}\right)
\right)
=
t^{\mu-\nu-1}
\phi\!\left(-\frac{\alpha}{3},\mu-\nu;\lambda t^{-\alpha/3}\right).
\end{equation}
which implies
\[
\frac{d^2}{dz^2}\phi\!\left(-\frac{\alpha}{3},\frac{2\alpha}{3};z\right)
=
\phi\!\left(-\frac{\alpha}{3},0;z\right).
\]
Therefore, for $s>0$ and $x<a$, we have
\[
\frac{\partial^2}{\partial x^2}G_\alpha^{2\alpha/3}(x-a,s)
=
\frac{1}{3s}\,
\phi\!\left(-\frac{\alpha}{3},0;\frac{x-a}{s^{\alpha/3}}\right),
\]
while for $x>a$,
\[
\frac{\partial^2}{\partial x^2}G_\alpha^{2\alpha/3}(x-a,s)
=
-\frac{2}{3s}\,
\textrm{Re}\left[
\phi\!\left(-\frac{\alpha}{3},0;
e^{2\pi i/3}\frac{x-a}{s^{\alpha/3}}
\right)
\right],
\]
and
\[
\frac{\partial^2}{\partial x^2}V_\alpha^{2\alpha/3}(x-a,s)
=
\frac{1}{3s}\,
\textrm{Im}\left[
\phi\!\left(-\frac{\alpha}{3},0;
e^{2\pi i/3}\frac{x-a}{s^{\alpha/3}}
\right)
\right].
\]

We first consider the left-hand limit of $w_3$.

\medskip
\noindent

For $x<a$, setting $s=t-\eta$, we write
\[
w_3(x,t)
=
\int_0^t
\frac{1}{3s}\,
\phi\!\left(-\frac{\alpha}{3},0;\frac{x-a}{s^{\alpha/3}}\right)
\tau_3(t-s)\,ds.
\]
Since $x-a<0$, introduce the change of variables
\[
u=\frac{a-x}{s^{\alpha/3}},
\qquad
s=\left(\frac{a-x}{u}\right)^{3/\alpha},
\qquad
\frac{ds}{s}=-\frac{3}{\alpha}\frac{du}{u}.
\]
Then
\[
w_3(x,t)
=
\frac{1}{\alpha}
\int_{\frac{a-x}{t^{\alpha/3}}}^{\infty}
\phi\!\left(-\frac{\alpha}{3},0;-u\right)
\frac{\tau_3\!\left(t-\left(\frac{a-x}{u}\right)^{3/\alpha}\right)}{u}\,du.
\]
Now split the integral as
\[
w_3(x,t)=J_1(x,t)+J_2(x,t),
\]
where
\[
J_1(x,t)=
\frac{1}{\alpha}
\int_{\frac{a-x}{t^{\alpha/3}}}^{\infty}
\phi\!\left(-\frac{\alpha}{3},0;-u\right)
\frac{
\tau_3\!\left(t-\left(\frac{a-x}{u}\right)^{3/\alpha}\right)-\tau_3(t)
}{u}\,du,
\]
\[
J_2(x,t)=
\frac{\tau_3(t)}{\alpha}
\int_{\frac{a-x}{t^{\alpha/3}}}^{\infty}
\phi\!\left(-\frac{\alpha}{3},0;-u\right)\frac{du}{u}.
\]

Since $\tau_3$ is continuous at $t$, for each fixed $u>0$,
\[
\tau_3\!\left(t-\left(\frac{a-x}{u}\right)^{3/\alpha}\right)\to \tau_3(t)
\qquad \text{as } x\to a-0.
\]
Moreover, by the standard exponential estimate (\ref{baho}) for the Wright function,
\[
\left|\phi\!\left(-\frac{\alpha}{3},0;-u\right)\right|
\le C e^{-\nu u^{\frac{1}{1-\alpha/3}}},
\]
hence
\[
\left|\phi\!\left(-\frac{\alpha}{3},0;-u\right)\right|\frac{1}{u}
\in L^1(0,\infty).
\]
Therefore, by the dominated convergence theorem,
\[
J_1(x,t)\to 0
\qquad \text{as } x\to a-0.
\]

For $J_2$, as $x\to a-0$ the lower limit tends to $0$. The following identity holds (see \cite{FM01,Podlubny})
\[
\int_0^\infty
\phi\!\left(-\frac{\alpha}{3},0;-u\right)\frac{du}{u}
=
\frac{\alpha}{3}.
\]
This identity follows from definition of M-Wright functions and the relation (\ref{MWright3}).

Using this identity we obtain
\[
J_2(x,t)\to \frac{1}{3}\tau_3(t).
\]
Thus,
\[
\lim_{x\to a-0} w_3(x,t)=\frac13\,\tau_3(t).
\]

\medskip
\noindent

Now we find the limit $\boldsymbol{x\to a+0}$ for $\boldsymbol{w_3}$.
For $x>a$, we have
\[
w_3(x,t)
=
-\frac{2}{3}
\int_0^t
\frac{1}{s}\,
\textrm{Re}\left[
\phi\!\left(-\frac{\alpha}{3},0;
e^{2\pi i/3}\frac{x-a}{s^{\alpha/3}}
\right)
\right]
\tau_3(t-s)\,ds.
\]
Now set
\[
u=\frac{x-a}{s^{\alpha/3}},
\qquad
s=\left(\frac{x-a}{u}\right)^{3/\alpha},
\qquad
\frac{ds}{s}=-\frac{3}{\alpha}\frac{du}{u}.
\]
Then
\[
w_3(x,t)
=
-\frac{2}{\alpha}
\int_{\frac{x-a}{t^{\alpha/3}}}^{\infty}
\textrm{Re}\left[
\phi\!\left(-\frac{\alpha}{3},0;e^{2\pi i/3}u\right)
\right]
\frac{\tau_3\!\left(t-\left(\frac{x-a}{u}\right)^{3/\alpha}\right)}{u}\,du.
\]
As before, splitting into the difference term and the principal term and applying dominated convergence, we get
\[
\lim_{x\to a+0} w_3(x,t)
=
-\frac{2\tau_3(t)}{\alpha}
\int_0^\infty
\textrm{Re}\left[
\phi\!\left(-\frac{\alpha}{3},0;e^{2\pi i/3}u\right)
\right]\frac{du}{u}.
\]
Using the identity (\ref{MWright3}), from
\[
\int_0^\infty
\textrm{Re}\left[
\phi\!\left(-\frac{\alpha}{3},0;e^{2\pi i/3}u\right)
\right]\frac{du}{u}
=
\frac{\alpha}{3},
\]
we obtain
\[
\lim_{x\to a+0} w_3(x,t)=-\frac23\,\tau_3(t).
\]

\medskip
\noindent

The limit $\boldsymbol{x\to a+0}$ for $\boldsymbol{w_4}$.
For $x>a$,
\[
w_4(x,t)
=
\frac13
\int_0^t
\frac{1}{s}\,
\textrm{Im}\left[
\phi\!\left(-\frac{\alpha}{3},0;
e^{2\pi i/3}\frac{x-a}{s^{\alpha/3}}
\right)
\right]
\tau_4(t-s)\,ds.
\]
With the same substitution
\[
u=\frac{x-a}{s^{\alpha/3}},
\]
we get
\[
w_4(x,t)
=
\frac{1}{\alpha}
\int_{\frac{x-a}{t^{\alpha/3}}}^{\infty}
\textrm{Im}\left[
\phi\!\left(-\frac{\alpha}{3},0;e^{2\pi i/3}u\right)
\right]
\frac{\tau_4\!\left(t-\left(\frac{x-a}{u}\right)^{3/\alpha}\right)}{u}\,du.
\]
Proceeding in the same way and applying the dominated convergence theorem, by continuity of $\tau_4$ and dominated convergence,
\[
\lim_{x\to a+0} w_4(x,t)
=
\frac{\tau_4(t)}{\alpha}
\int_0^\infty
\textrm{Im}\left[
\phi\!\left(-\frac{\alpha}{3},0;e^{2\pi i/3}u\right)
\right]\frac{du}{u}.
\]
Finally, by the corresponding Wright-function identity,
\[
\int_0^\infty
\textrm{Im}\left[
\phi\!\left(-\frac{\alpha}{3},0;e^{2\pi i/3}u\right)
\right]\frac{du}{u}
=0,
\]
and therefore
\[
\lim_{x\to a+0} w_4(x,t)=0.
\]

The proof is complete.
\end{proof}

\medskip

\begin{lemma}
Let $\tau_5\in C[a,b]$ and define
\[
w_5(x,t)=\int_a^b G_\alpha^{2\alpha/3}(x-\xi,t)\tau_5(\xi)\,d\xi.
\]
Then $w_5$ satisfies
\[
\partial_{0t}^{\alpha}w_5(x,t)-\frac{\partial^3 w_5(x,t)}{\partial x^3}=0,
\]
and
\[
\lim_{t\to0+} I_{0t}^{1-\alpha}w_5(x,t)=\tau_5(x), \qquad x\in(a,b).
\]
\end{lemma}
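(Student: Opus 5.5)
The plan has two parts: the equation follows from the kernel identities, and the initial condition is a delta-sequence argument for the kernel $G_\alpha^{1}$.

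\textbf{The equation.} Fix $t>0$. The kernel $G_\alpha^{2\alpha/3}(x-\xi,t)$ is smooth in $x$ for $x\neq\xi$ and decays exponentially by (\ref{baho}). So for $x\notin[a,b]$ we may differentiate under the $\xi$-integral and use (\ref{eqg1})--(\ref{eqg2}) with $\mu=2\alpha/3$; this gives $\partial_{0t}^{\alpha}w_5-\partial_x^3 w_5=0$ directly.

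For $x\in(a,b)$ the diagonal $\xi=x$ lies in the domain of integration, and a direct argument is harder. There I would first write $\partial_x^k G_\alpha^{2\alpha/3}(x-\xi,t)=(-\partial_\xi)^kG_\alpha^{2\alpha/3}(x-\xi,t)$. If $\tau_5$ is smooth enough, integrating by parts moves the $x$-derivatives onto $\tau_5$. The boundary terms involve $G_\alpha^{2\alpha/3}$ and its first two $x$-derivatives at $x-a$ and $x-b$, and these are regular for fixed $t>0$. The $t$-derivative needs no integration by parts, since for fixed $t>0$ the kernel is integrable in $\xi$. One also needs the kernel's $t$-differentiability to be uniform in $\xi$ near the diagonal; I would check this from the series (\ref{Wright}), which converges uniformly on compact sets.

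For merely continuous $\tau_5$, I would approximate by smooth functions and pass to the limit. This uses the fact that the identity holds for $x\neq\xi$ and that all kernel derivatives at a fixed $t>0$ are bounded functions of $x-\xi$ with Gaussian-type decay. The one thing to verify is that each $\partial_x^kG_\alpha^{2\alpha/3}(\cdot,t)$ is continuous across $0$ for $t>0$. This should follow from the fact that the two branches in (\ref{fundamental1}) come from a single Fourier--Laplace representation.

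\textbf{The initial condition.} First compute $I^{1-\alpha}_{0t}$ of the kernel. By the fractional integration formula for $t^{\mu-1}\phi(-\alpha/3,\mu;\lambda t^{-\alpha/3})$ (the formula quoted in the proof of Lemma 2, with $\nu=-(1-\alpha)$), we get
\[
I_{0t}^{1-\alpha}w_5(x,t)=\int_a^b G_\alpha^{1-\alpha/3}(x-\xi,t)\tau_5(\xi)\,d\xi ,
\]
where
\[
G_\alpha^{1-\alpha/3}(y,t)=\frac{1}{3t^{\alpha/3}}\phi\!\left(-\tfrac{\alpha}{3},1-\tfrac{\alpha}{3};\tfrac{y}{t^{\alpha/3}}\right)\quad (y<0)
\]
and analogously for $y>0$. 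Now substitute $\xi=x-t^{\alpha/3}z$. The integral becomes
\[
\int \Phi(z)\,\tau_5(x-t^{\alpha/3}z)\,dz
\]
over an interval that expands to all of $\mathbb R$ when $x\in(a,b)$. Here $\Phi$ is the $t$-independent profile: $\tfrac13\phi(-\tfrac\alpha3,1-\tfrac\alpha3;-z)$ on one side and $-\tfrac23\textrm{Re}[e^{2\pi i/3}\phi(\cdots;e^{2\pi i/3}z)]$ on the other. By (\ref{baho}) the profile $\Phi$ is integrable, so dominated convergence gives the limit $\tau_5(x)\int_{\mathbb R}\Phi(z)\,dz$.

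\textbf{Main obstacle.} The crux is the normalization $\int_{\mathbb R}\Phi=1$. On the negative side, $M_{\alpha/3}(z)=\phi(-\alpha/3,1-\alpha/3;-z)$ has $\int_0^\infty M_{\alpha/3}=1$, contributing $1/3$. On the positive side one needs the integral of the rotated Wright function. I would compute it via the Hankel representation (\ref{Wright2}), integrating $e^{\sigma+e^{2\pi i/3}z\sigma^{\alpha/3}}$ in $z$ first. This gives $-\frac{1}{2\pi i}\int_{Ha} e^{\sigma}\sigma^{-1}e^{-2\pi i/3}\,d\sigma=-e^{-2\pi i/3}$. Taking $-\tfrac23\textrm{Re}[e^{2\pi i/3}\cdot(-e^{-2\pi i/3})]$ then gives $2/3$, and the total is $1$. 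Justifying the interchange of integrals and the decay along the rotated ray (where $|\arg z|=2\pi/3$ must satisfy the sector condition of (\ref{baho})) is the delicate part.
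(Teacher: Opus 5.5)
Your proposal follows the paper's proof (the kernel identities (\ref{eqg1})--(\ref{eqg2}) for the equation; $I_{0t}^{1-\alpha}G_\alpha^{2\alpha/3}=G_\alpha^{1-\alpha/3}$, self-similar rescaling and dominated convergence for the initial condition), except that you verify the normalization by a Hankel-contour computation instead of quoting (\ref{eqg3}) --- your $\tfrac13+\tfrac23$ is correct, and the interchange is legitimate once the contour's rays are taken at $\arg\sigma=\pm\theta$ with $\pi/2<\theta<\min\{\pi,\pi/(2\alpha)\}$, since then $\operatorname{Re}(e^{2\pi i/3}\sigma^{\alpha/3})<0$ on the contour. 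The diagonal detour is unnecessary, because for fixed $t>0$ the one-sided values at $0$ of $\partial_x^kG_\alpha^{2\alpha/3}(\cdot,t)$ agree for $k\le 3$ (for $k=2$ both vanish since $1/\Gamma(0)=0$), so the kernel is $C^3$ in $x$; the step that really needs a word is moving the time-nonlocal operator through $\int_a^b d\xi$, which is Fubini with $\int_{\mathbb R}|G_\alpha^{2\alpha/3}(y,s)|\,dy\le Cs^{\alpha-1}$, and there $\partial_{0t}^{\alpha}$ must be read as $\frac{d}{dt}I_{0t}^{1-\alpha}$ rather than in the Caputo form (\ref{hosila}) --- a point that affects the paper equally --- because $w_5(x,t)\sim\tau_5(x)t^{\alpha-1}/\Gamma(\alpha)$ as $t\to0+$ for $x\in(a,b)$.
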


\begin{proof}
    
We use differential identities (\ref{eqg1}, \ref{eqg2}) for the kernel $G_\alpha^\mu$.

Hence, differentiating under the integral sign, we obtain
\[
\partial_{0t}^{\alpha}w_5(x,t)
=
\int_a^b G_\alpha^{-\alpha/3}(x-\xi,t)\tau_5(\xi)\,d\xi
=
\frac{\partial^3 w_5(x,t)}{\partial x^3}.
\]

Next, using the fractional integration identity
\[
I_{0t}^{1-\alpha}G_\alpha^{2\alpha/3}(x,t)=G_\alpha^{1-\alpha/3}(x,t),
\]
we have
\[
I_{0t}^{1-\alpha}w_5(x,t)
=
\int_a^b G_\alpha^{1-\alpha/3}(x-\xi,t)\tau_5(\xi)\,d\xi.
\]
By the scaling property of the kernel,
\[
G_\alpha^{1-\alpha/3}(x,t)
=
t^{-\alpha/3}g_\alpha\!\left(\frac{x}{t^{\alpha/3}}\right),
\]
where
\[
g_\alpha(y):=G_\alpha^{1-\alpha/3}(y,1), \qquad g_\alpha\in L^1(\mathbb R),
\qquad \int_{-\infty}^{\infty}g_\alpha(y)\,dy=1.
\]
Therefore,
\[
I_{0t}^{1-\alpha}w_5(x,t)
=
\int_{\frac{x-b}{t^{\alpha/3}}}^{\frac{x-a}{t^{\alpha/3}}}
g_\alpha(y)\,\tau_5(x-t^{\alpha/3}y)\,dy.
\]
Since $\tau_5$ is continuous on $[a,b]$, for every fixed $y$ we have
\[
\tau_5(x-t^{\alpha/3}y)\to \tau_5(x)
\qquad \text{as } t\to0+.
\]
Moreover,
\[
|g_\alpha(y)\tau_5(x-t^{\alpha/3}y)|
\le \|\tau_5\|_{C[a,b]} |g_\alpha(y)|,
\]
and $g_\alpha\in L^1(\mathbb R)$. Hence, by the dominated convergence theorem,
\[
\lim_{t\to0+}I_{0t}^{1-\alpha}w_5(x,t)
=
\tau_5(x)\int_{-\infty}^{\infty}g_\alpha(y)\,dy
=
\tau_5(x).
\]
The proof is complete.
\end{proof}

\medskip

\begin{lemma}
Let $f\in C([a,b]\times[0,T])$. Then the function
\[
w_6(x,t)=\int_0^t\!\!\int_a^b
G_\alpha^{2\alpha/3}(x-\xi,t-\eta)f(\xi,\eta)
\,d\xi d\eta
\]
is a solution of
\[
\partial_{0t}^{\alpha}u-\frac{\partial^3 u}{\partial x^3}=f(x,t),
\]
and satisfies
\[
I_{0t}^{1-\alpha}w_6(x,t)\to0\qquad (t\to0+).
\]
\end{lemma}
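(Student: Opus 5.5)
The plan is to treat $w_6$ as a Duhamel-type superposition of the potentials from Lemma 3. For fixed $\eta$ we regard
\[
v(x,t;\eta)=\int_a^b G_\alpha^{2\alpha/3}(x-\xi,t-\eta)f(\xi,\eta)\,d\xi, \qquad t>\eta .
\]
Lemma 3, applied with $\tau_5=f(\cdot,\eta)$ and the time origin shifted to $\eta$, gives
\[
\partial^\alpha_{\eta t}v-\partial_x^3 v=0,
\qquad
I^{1-\alpha}_{\eta t}v\to f(x,\eta)\quad (t\to\eta+).
\]
Then $w_6(x,t)=\int_0^t v(x,t;\eta)\,d\eta$. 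We will work with the Riemann–Liouville form of the derivative, $\partial^\alpha_{0t}=\frac{d}{dt}I^{1-\alpha}_{0t}$. This coincides with the Caputo derivative (\ref{hosila}) once we know $I^{1-\alpha}_{0t}w_6\to0$, which is the second claim.

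\textbf{Fractional integral step.} Using Fubini and the identity $I^{1-\alpha}_{0t}G_\alpha^{2\alpha/3}=G_\alpha^{1-\alpha/3}$ (applied in the shifted variable $t-\eta$), we get
\[
I^{1-\alpha}_{0t}w_6(x,t)=\int_0^t\!\!\int_a^b G_\alpha^{1-\alpha/3}(x-\xi,t-\eta)f(\xi,\eta)\,d\xi\,d\eta .
\]
By the scaling $G_\alpha^{1-\alpha/3}(x,s)=s^{-\alpha/3}g_\alpha(x/s^{\alpha/3})$ with $g_\alpha\in L^1$, the inner integral is bounded by $\|f\|_\infty\|g_\alpha\|_{L^1}$. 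Hence $|I^{1-\alpha}_{0t}w_6|\le C t\to0$, which proves the second claim.

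\textbf{Equation step.} We differentiate the representation above in $t$ using the Leibniz rule. This gives a boundary term and an integral term:
\[
\frac{d}{dt}I^{1-\alpha}_{0t}w_6=\lim_{\eta\to t-}I^{1-\alpha}_{\eta t}v(x,t;\eta)+\int_0^t\partial^\alpha_{\eta t}v(x,t;\eta)\,d\eta .
\]
By Lemma 3 the boundary term equals $f(x,t)$. This uses the dominated-convergence argument of Lemma 3 with the continuity of $f$ in both variables, uniformly in $\eta$, for $x\in(a,b)$. For the integral term, we use (\ref{eqg1}) and (\ref{eqg2}) to replace $\partial^\alpha_{\eta t}v$ by $\partial^3_x v$. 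We then pull $\partial_x^3$ outside the $\eta$-integral to obtain $\partial_x^3 w_6$. Altogether $\partial^\alpha_{0t}w_6-\partial_x^3w_6=f$.

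\textbf{Main obstacle.} The hard part is justifying the interchange of $\partial_x^3$ (and of $d/dt$) with the $\eta$-integral. The kernel $\partial_x^3G_\alpha^{2\alpha/3}=G_\alpha^{-\alpha/3}$ behaves like $s^{-1-\alpha/3}$ times a function of $x/s^{\alpha/3}$, which is not integrable near $s=0$. The approach is to differentiate only twice under the integral. After the $\xi$-integration, $\partial_x^2 G_\alpha^{2\alpha/3}(x-\xi,s)$ has integrable singularity $O(s^{-1+\alpha/3})$ by the scaling and the estimate (\ref{baho}). For the third derivative, we first move one $x$-derivative onto $\xi$ and integrate by parts in $\xi$. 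If $f$ is only continuous, we instead write $f(\xi,\eta)=[f(\xi,\eta)-f(x,\eta)]+f(x,\eta)$. The bracket is handled by smoothness of $f$: we expect to need Hölder continuity in $\xi$, which we would add as a mild assumption if necessary. The remaining term $f(x,\eta)\int_a^b\partial^3_xG\,d\xi$ reduces to boundary values of $\partial_x^2G$ at $\xi=a,b$, which are bounded for $x\in(a,b)$ by (\ref{baho}).
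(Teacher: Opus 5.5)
Your argument is the paper's argument made explicit. Both use Duhamel superposition of the Lemma 3 potentials, the identities (\ref{eqg1})--(\ref{eqg2}) to trade $\partial^{\alpha}_{0t}$ for $\partial_x^3$ under the integral, and the boundary term $\lim_{\eta\to t}\int_a^b G_\alpha^{1-\alpha/3}(x-\xi,t-\eta)f(\xi,\eta)\,d\xi=f(x,t)$ from the approximate identity $\int_{\mathbb{R}}g_\alpha=1$. You improve on the paper in two places. First, the paper never checks $I^{1-\alpha}_{0t}w_6\to0$; your $O(t)$ bound does. Second, the paper simply ``differentiates under the integral sign'', whereas you correctly note that $\int_a^b G_\alpha^{-\alpha/3}(x-\xi,s)f(\xi,\eta)\,d\xi$ is $O(s^{-1})$ without cancellation. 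Your subtraction and integration-by-parts fix is right. Since $\partial_t G_\alpha^{1-\alpha/3}=G_\alpha^{-\alpha/3}$, the same decomposition also justifies your Leibniz step. But it genuinely strengthens the hypothesis. The term with $f(\xi,\eta)-f(x,\eta)$ is only bounded by $C\int_0^t s^{-1}\omega(s^{\alpha/3})\,ds$, where $\omega$ is the modulus of continuity of $f$ in $\xi$. So you need Dini (e.g.\ H\"older) continuity in $\xi$, uniformly in $\eta$. State this as an assumption rather than ``if necessary''. For merely continuous $f$, neither your proof nor the paper's yields $\partial_x^3 w_6$, and, as with the heat volume potential, one should not expect it in general.

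One step of your setup is justified incorrectly. The condition $I^{1-\alpha}_{0t}w\to0$ does not make $\frac{d}{dt}I^{1-\alpha}_{0t}$ agree with the Caputo derivative (\ref{hosila}). For example, $w\equiv1$ satisfies it, yet the two derivatives differ by $t^{-\alpha}/\Gamma(1-\alpha)$. What you need is $w_6(x,0+)=0$. This does hold: $\int_{\mathbb{R}}|G_\alpha^{2\alpha/3}(y,s)|\,dy=Cs^{\alpha-1}$ gives $|w_6|\le C\|f\|_\infty t^{\alpha}$. You also need absolute continuity of $w_6$ in $t$ so that (\ref{hosila}) is defined at all. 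That requires regularity of $f$ in $t$: for $f=f(t)$, $w_6$ is essentially $J^{\alpha}_{0t}f$, which need not be absolutely continuous. The simplest repair is to read $\partial^{\alpha}_{0t}$ as the Riemann--Liouville derivative throughout. That is the only setting in which Lemma 3 (where $w_5\sim\tau_5(x)t^{\alpha-1}/\Gamma(\alpha)$) and the initial condition $I^{1-\alpha}_{0t}w_6\to0$ in the statement make sense.
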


\begin{proof}
    
Using the differential identity for the fundamental kernel
\[
\partial_{0t}^{\alpha}G_\alpha^{2\alpha/3}(x,t)
=
\frac{\partial^3}{\partial x^3}G_\alpha^{2\alpha/3}(x,t),
\]
we differentiate under the integral sign and obtain
\[
\partial_{0t}^{\alpha}w_6(x,t)
=
\int_0^t\!\!\int_a^b
\partial_x^3 G_\alpha^{2\alpha/3}(x-\xi,t-\eta)f(\xi,\eta)
\,d\xi d\eta.
\]
Hence,
\[
\partial_{0t}^{\alpha}w_6(x,t)
=
\frac{\partial^3}{\partial x^3}w_6(x,t)
+
\lim_{\eta\to t}\int_a^b
G_\alpha^{1-\alpha/3}(x-\xi,t-\eta)f(\xi,\eta)d\xi.
\]

Using the standard representation of solutions via the fundamental kernel, we have
\[
G_\alpha^{1-\alpha/3}(x,t)
=
t^{-\alpha/3}\,
g_\alpha\!\left(\frac{x}{t^{\alpha/3}}\right),
\]
where
\[
g_\alpha(y):=G_\alpha^{1-\alpha/3}(y,1),
\qquad
g_\alpha\in L^1(\mathbb R).
\]

In particular, from relation \cite{AVP02}
\begin{equation}\label{eqg3} 
\int_{-\infty}^{+\infty} G_\alpha^{\mu}(x,t)\,dx
=
\frac{t^{\mu+\frac{\alpha}{3}-1}}{\Gamma\!\left(\mu+\frac{\alpha}{3}\right)}.
\end{equation}
with $\mu=1-\alpha/3$ it follows that
\[
\int_{\mathbb R} g_\alpha(y)\,dy = 1,
\]
and hence
\[
\int_{\mathbb R} G_\alpha^{1-\alpha/3}(x,t)\,dx = 1.
\]

Consequently, for any $f\in C([a,b]\times[0,T])$,
\[
\int_a^b G_\alpha^{1-\alpha/3}(x-\xi,t-\eta)f(\xi,\eta)\,d\xi
\;\longrightarrow\;
f(x,t)
\quad \text{as } \eta\to t.
\]
The convergence follows from the dominated convergence theorem and the continuity of $f$.

\end{proof}

\begin{lemma}
Let $f\in C([a,b]\times[0,T])$. Then the function
\[
w_6(x,t)=\int_0^t\int_a^b
G_\alpha^{2\alpha/3}(x-\xi,t-\eta)f(\xi,\eta)\,d\xi\,d\eta
\]
satisfies
\[
\partial_{0t}^{\alpha}w_6(x,t)-\frac{\partial^3 w_6(x,t)}{\partial x^3}=f(x,t),
\]
and
\[
w_6(x,0)=0.
\]
\end{lemma}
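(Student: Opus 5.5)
The plan is to treat $w_6$ as a superposition in $\eta$ of Cauchy-type potentials of the form in Lemma 3, and then transfer the properties established there. Set
\[
v(x,t;\eta)=\int_a^b G_\alpha^{2\alpha/3}(x-\xi,t-\eta)f(\xi,\eta)\,d\xi ,\qquad t>\eta ,
\]
so that $w_6(x,t)=\int_0^t v(x,t;\eta)\,d\eta$. By Lemma 3, applied with $\tau_5=f(\cdot,\eta)$ and time shifted by $\eta$, each $v(\cdot,\cdot;\eta)$ solves the homogeneous equation for $t>\eta$. Lemma 3 also gives $I^{1-\alpha}_{\eta t}v\to f(x,\eta)$ as $t\to\eta+$, which is the identity (\ref{eqg3}) with $\mu=1-\alpha/3$.

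\textbf{Initial condition.} I estimate $|w_6(x,t)|\le \|f\|_C\int_0^t\int_{\mathbb R}|G_\alpha^{2\alpha/3}(y,s)|\,dy\,ds$. By the scaling $G_\alpha^{2\alpha/3}(y,s)=s^{2\alpha/3-1}g(y s^{-\alpha/3})$, the inner integral equals $C s^{\alpha-1}$. The constant $C$ is finite by the estimate (\ref{baho}) applied to both branches of (\ref{fundamental1}). Hence $|w_6|\le C\|f\|\,t^{\alpha}\to 0$, which gives $w_6(x,0)=0$. This step is routine.

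\textbf{Equation.} I first compute the $x$-derivatives by differentiating under the integral: $\partial_x^3 w_6=\int_0^t\int_a^b \partial_x^3G_\alpha^{2\alpha/3}f\,d\xi\,d\eta$. This needs care because $\partial_x^3G$ is not absolutely integrable near $s=0$. I would therefore move one or two derivatives onto $\xi$ by integration by parts, or alternatively replace $f$ by $f(\xi,\eta)-f(x,\eta)$ in the singular part. I would first assume $f$ is Hölder in $\xi$ (or reduce to that case by approximation) and then use (\ref{eqg2}).

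For the time part I write $\partial^\alpha_{0t}=\frac{d}{dt}I^{1-\alpha}_{0t}$, which is valid since $w_6(x,0)=0$. Using Fubini,
\[
I^{1-\alpha}_{0t}w_6(x,t)=\int_0^t \int_a^b G_\alpha^{1-\alpha/3}(x-\xi,t-\eta)f(\xi,\eta)\,d\xi\,d\eta ,
\]
where I used the semigroup relation $I^{1-\alpha}G_\alpha^{2\alpha/3}=G_\alpha^{1-\alpha/3}$ quoted in the proof of Lemma 3. Differentiating in $t$ produces two contributions. The boundary term is $\lim_{\eta\to t}\int_a^b G_\alpha^{1-\alpha/3}(x-\xi,t-\eta)f(\xi,\eta)\,d\xi$, which equals $f(x,t)$ by the dominated-convergence argument of Lemma 3 and (\ref{eqg3}). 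The interior term is $\int_0^t\int_a^b \partial_t G_\alpha^{1-\alpha/3}f\,d\xi\,d\eta=\int_0^t\int_a^b G_\alpha^{-\alpha/3}f\,d\xi\,d\eta$. By (\ref{eqg1}) and (\ref{eqg2}) the interior term equals $\partial_x^3 w_6$. Subtracting the two computations gives $\partial^\alpha_{0t}w_6-\partial_x^3w_6=f$.

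\textbf{Main obstacle.} The hard part is justifying the differentiation of the interior integral. The kernel $G_\alpha^{-\alpha/3}(y,s)$ behaves like $s^{-1-\alpha/3}$ times a rapidly decaying function of $ys^{-\alpha/3}$. Its $y$-integral is of order $s^{-1}$ and is not integrable at $s=0$. The saving fact is that $\int_{\mathbb R}G_\alpha^{-\alpha/3}(y,s)\,dy=0$ by (\ref{eqg3}), since $1/\Gamma(0)=0$. I would therefore write $f(\xi,\eta)=[f(\xi,\eta)-f(x,\eta)]+f(x,\eta)$. With Hölder continuity of exponent $\beta$, the first piece gains a factor $s^{\alpha\beta/3}$ and becomes integrable. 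The second piece reduces to integrals over $\mathbb R\setminus(a,b)$, which are harmless for $x\in(a,b)$ by the exponential decay (\ref{baho}). The pure continuity case would then follow by a limiting argument, or I would simply adopt Hölder continuity of $f$ as the working hypothesis, in the same spirit as the earlier lemmas.
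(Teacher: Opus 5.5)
Your route is the paper's: pass to $I^{1-\alpha}_{0t}w_6=\int_0^t\!\int_a^b G_\alpha^{1-\alpha/3}(x-\xi,t-\eta)f\,d\xi\,d\eta$, extract $f(x,t)$ from the endpoint $\eta=t$ because $G_\alpha^{1-\alpha/3}$ is an approximate identity, and match the remaining $\int_0^t\!\int_a^b G_\alpha^{-\alpha/3}f$ with $\partial_x^3w_6$ via \eqref{eqg1}--\eqref{eqg2}. You are more careful than the paper, which simply differentiates under the integral sign. You prove $|w_6|\le C\|f\|t^{\alpha}$, and you notice that $\int_{\mathbb R}|G_\alpha^{-\alpha/3}(y,s)|\,dy\sim s^{-1}$. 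Your repair, using $\int_{\mathbb R}G_\alpha^{-\alpha/3}(y,s)\,dy=0$ together with a H\"older condition in $\xi$ (uniform in $\eta$), is correct, and under that condition your argument is sound.

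\textbf{Gap: the limiting argument.} The step that fails is the fallback ``the pure continuity case would then follow by a limiting argument''. Approximating $f$ uniformly by H\"older $f_n$ controls $w_6^{(n)}$, $I^{1-\alpha}_{0t}w_6^{(n)}$ and the difference $\partial^\alpha_{0t}w_6^{(n)}-\partial_x^3w_6^{(n)}=f_n$. It does not control $\partial_x^3w_6^{(n)}$ by itself, so in the limit you only get the equation in a weak sense.

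\textbf{The continuous case is actually false.} No argument can do better, because the classical statement fails for some continuous $f$. Write $G_\alpha^{-\alpha/3}(y,s)=s^{-1-\alpha/3}\Phi(ys^{-\alpha/3})$. Take $F$ continuous with compact support and $c=\int_{\mathbb R}\Phi F\,dz\neq0$. Set $f(\xi,\eta)=\epsilon(t_0-\eta)F\bigl((x_0-\xi)(t_0-\eta)^{-\alpha/3}\bigr)$ for $\eta<t_0$ and $f=0$ for $\eta\ge t_0$. Here $\epsilon(s)=1/\log(1/s)$ for small $s$, cut off for larger $s$.

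This $f$ is continuous. However, each slice $s=t_0-\eta$ of the integral representing $\partial_x^3w_6(x_0,t_0)$ equals $c\,\epsilon(s)/s$, so that integral diverges. Splitting at $s=h^{3/\alpha}$, as for the heat volume potential, shows that the difference quotients of $\partial_x^2w_6$ at $x_0$ behave like $c\log\log(1/h)$. Hence $\partial_x^3w_6(x_0,t_0)$ does not exist.

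So the lemma needs a H\"older (or Dini) modulus of continuity in $\xi$. You should state that hypothesis rather than claim the continuous case; the paper's proof does not establish the continuous case either.

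\textbf{A related point on the Caputo derivative.} You identify $\partial^\alpha_{0t}$ with $\frac{d}{dt}I^{1-\alpha}_{0t}$ on the grounds that $w_6(x,0)=0$. With the definition (\ref{hosila}) this presupposes that $w_6(x,\cdot)$ is absolutely continuous, which is not automatic when $f$ is merely continuous in $\eta$. It is cleaner to take $\frac{d}{dt}I^{1-\alpha}_{0t}$ as the meaning of $\partial^\alpha_{0t}$ here.
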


\begin{proof}
We first show that $w_6$ satisfies the equation
\[
\partial_{0t}^{\alpha}u-\frac{\partial^3u}{\partial x^3}=f(x,t).
\]

By the differential identities for the fundamental kernel,
\[
\partial_{0t}^{\alpha}G_\alpha^{2\alpha/3}(x,t)
=
G_\alpha^{-\alpha/3}(x,t),
\qquad
\frac{\partial^3}{\partial x^3}G_\alpha^{2\alpha/3}(x,t)
=
G_\alpha^{-\alpha/3}(x,t),
\]
we may differentiate under the integral sign and obtain
\[
\partial_{0t}^{\alpha}w_6(x,t)
=
\int_0^t\int_a^b
\partial_{0t}^{\alpha}G_\alpha^{2\alpha/3}(x-\xi,t-\eta)\,f(\xi,\eta)\,d\xi\,d\eta,
\]
that is,
\[
\partial_{0t}^{\alpha}w_6(x,t)
=
\int_0^t\int_a^b
G_\alpha^{-\alpha/3}(x-\xi,t-\eta)\,f(\xi,\eta)\,d\xi\,d\eta.
\]
Similarly,
\[
\frac{\partial^3w_6(x,t)}{\partial x^3}
=
\int_0^t\int_a^b
\frac{\partial^3}{\partial x^3}G_\alpha^{2\alpha/3}(x-\xi,t-\eta)\,f(\xi,\eta)\,d\xi\,d\eta
=
\int_0^t\int_a^b
G_\alpha^{-\alpha/3}(x-\xi,t-\eta)\,f(\xi,\eta)\,d\xi\,d\eta.
\]

Thus, the singular contribution at $\eta=t$ has to be taken into account. To extract this term, we apply the fractional integral operator $I_{0t}^{1-\alpha}$ to $w_6$:
\[
I_{0t}^{1-\alpha}w_6(x,t)
=
\int_0^t\int_a^b
G_\alpha^{1-\alpha/3}(x-\xi,t-\eta)\,f(\xi,\eta)\,d\xi\,d\eta.
\]

Now we use the scaling property of the kernel:
\[
G_\alpha^{1-\alpha/3}(x,t)
=
t^{-\alpha/3}\,
g_\alpha\!\left(\frac{x}{t^{\alpha/3}}\right),
\qquad
g_\alpha(y):=G_\alpha^{1-\alpha/3}(y,1).
\]
Moreover, by \eqref{eqg3} with $\mu=1-\alpha/3$,
\[
\int_{\mathbb R}g_\alpha(y)\,dy=1,
\]
so the family $\{G_\alpha^{1-\alpha/3}(\cdot,t)\}_{t>0}$ forms an approximate identity.

Therefore, for every $(x,t)\in(a,b)\times(0,T]$,
\[
\int_a^b
G_\alpha^{1-\alpha/3}(x-\xi,t-\eta)\,f(\xi,\eta)\,d\xi
\longrightarrow
f(x,t)
\qquad \text{as }\eta\to t.
\]
Hence,
\[
\partial_{0t}^{\alpha}w_6(x,t)-\frac{\partial^3w_6(x,t)}{\partial x^3}=f(x,t).
\]
Finally, the initial condition $w_6(x,0)=0$ follows directly from the definition of $w_6$.
\end{proof}

\section{Well-posedness of problems}
\subsection{Uniqueness of solutions}
\begin{theorem}\label{yagona}
The Problem 1 has at most one regular solution.
\end{theorem}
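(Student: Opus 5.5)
We argue by contradiction in the standard way. Suppose $u_1,u_2\in W_1$ both solve Problem 1 with the same data $f,\varphi_1,\varphi_2,\varphi_3$. Then $u=u_1-u_2$ solves the homogeneous equation $\partial_{0t}^{\alpha}u-u_{xxx}=0$ in $E_1$ with $u(x,0)=0$, $u(0,t)=u_x(0,t)=0$ and $u(1,t)=0$. It suffices to show $u\equiv 0$, and we do this with the energy-integral method mentioned in the abstract. We multiply the homogeneous equation by $u$ and integrate over $x\in(0,1)$, working on $(\delta,1-\delta)$ first and letting $\delta\to0$; this limit is justified by $u\in C(\overline{E_1})$ and $u_x\in C([0,1)\times(0,T])$.

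The third-order term is handled by integrating by parts twice:
\[
\int_0^1 u\,u_{xxx}\,dx=\Bigl[u\,u_{xx}-\tfrac12 u_x^2\Bigr]_{x=0}^{x=1}.
\]
The boundary conditions kill every term except $-\tfrac12u_x^2(1,t)\le0$. One caveat: the term $u\,u_{xx}$ at $x=1$ must vanish, which needs $u(1,t)=0$ together with enough control of $u_{xx}$ near $x=1$. If regularity up to $x=1$ fails, we will multiply instead by a cutoff or by $(1-x)$-type weights and pass to the limit. If we use the multiplier $u$ alone, this gives
\[
\int_0^1 u\,\partial_{0t}^{\alpha}u\,dx = -\tfrac12 u_x^2(1,t)\le 0 .
\]

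For the fractional term we use the known inequality $u\,\partial_{0t}^{\alpha}u\ge \tfrac12\partial_{0t}^{\alpha}(u^2)$, valid for absolutely continuous $u$ (Alikhanov's lemma). Combining it with the previous display yields $\partial_{0t}^{\alpha}\|u(\cdot,t)\|_{L^2(0,1)}^2\le 0$. We then apply the fractional integral $J_{0,t}^{\alpha}$ and use $u(x,0)=0$, which gives
\[
\|u(\cdot,t)\|^2_{L^2(0,1)}\le 0 .
\]
More generally, if lower-order terms appear (for instance when a weight such as $e^{-x}$ or $x$ is needed to control the boundary terms), we would obtain $y(t)\le C\,J_{0,t}^{\alpha}y(t)$ for $y(t)=\|u(\cdot,t)\|^2$. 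The fractional Gronwall–Bellman analogue then gives $y\equiv0$. Either way $u\equiv0$ on $\overline{E_1}$ by continuity, which proves uniqueness.

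The main obstacle is the boundary term at $x=1$. There only $u$ is prescribed, and the class $W_1$ gives no continuity of $u_x$ or $u_{xx}$ up to $x=1$. The sign of $-\tfrac12u_x^2(1,t)$ is favourable, but the limit of $u\,u_{xx}$ as $x\to1$ must be shown to vanish. The first thing we would try is to interpret "regular solution" as including $u_{xx}$ bounded near $x=1$. Failing that, we would integrate over $(0,1-\delta)$ and use the mean-value theorem in $\delta$ to pick a sequence along which the boundary contribution tends to zero. A secondary technical point is justifying $u\,\partial^\alpha_{0t}u\ge\frac12\partial^\alpha_{0t}u^2$, which requires $u_t$ integrable near $t=0$ and should be included in the notion of regular solution.
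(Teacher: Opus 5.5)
Your argument is correct and is essentially the paper's: use Alikhanov's inequality $\partial_{0t}^{\alpha}\int_0^1 u^2\,dx\le 2\int_0^1 u\,\partial_{0t}^{\alpha}u\,dx$, integrate $u\,u_{xxx}$ by parts to get the boundary term $\left.(2uu_{xx}-u_x^2)\right|_0^1=-u_x^2(1,t)\le 0$, and conclude from $u(x,0)=0$. The only difference is that you work directly with the homogeneous difference problem and apply $J_{0,t}^{\alpha}$ to $\partial_{0t}^{\alpha}\|u\|^2\le 0$. The paper instead first derives the a priori estimate $\partial_{0t}^{\alpha}\|u\|^2\le 4\|u\|^2+\frac14\|f\|^2$ for general $f$ and then invokes the fractional Gronwall--Bellman analogue; it also tacitly assumes the control of $u_{xx}$ near the endpoints that you rightly flag.
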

\textbf{Proof.} To prove this theorem we need following inequality from \cite{Ali1}
\begin{equation}\label{alitengsizlik}
\partial_{0t}^{\alpha }\int\limits_{a}^{b}{{{u}^{2}}(x,t)dx}\le 2\int\limits_{a}^{b}{u(x,t)\partial_{0t}^{\alpha }u(x,t)dx}.
\end{equation}
Substituting in this inequality $\partial_{0t}^{\alpha }u(x,t)$ for $\frac{{{\partial }^{3}}}{\partial {{x}^{3}}}u(x,t)+f\left( x,t \right)$ from  (\ref{tenglama}) and using Cauchy inequality we have
$$\partial_{0t}^{\alpha }\int\limits_{a}^{b}{{{u}^{2}}dx}\le 2\int\limits_{a}^{b}{u\partial_{0t}^{\alpha }udx}=2\int\limits_{a}^{b}{u(u_{xxx}+f)dx}=$$
$$=2\int\limits_{a}^{b}{ud{{u}_{xx}}}+2\int\limits_{a}^{b}{ufdx}=2\left( \left. u{{u}_{xx}} \right|_{a}^{b}-\int\limits_{a}^{b}{ud{{u}_{xx}}} \right)+2\int\limits_{a}^{b}{ufdx}\le $$
$$\le \left. \left( 2u{{u}_{xx}}-{{\left( {{u}_{x}} \right)}^{2}} \right) \right|_{a}^{b}+4\int\limits_{a}^{b}{{{u}^{2}}dx}+\frac{1}{4}\int\limits_{a}^{b}{{{f}^{2}}dx}.$$
So we took following estimate
\begin{equation}\label{umumtsiz}
\partial_{0t}^{\alpha }\int\limits_{a}^{b}{{{u}^{2}}dx}\le \left. \left( 2u{{u}_{xx}}-{{\left( {{u}_{x}} \right)}^{2}} \right) \right|_{a}^{b}+4\int\limits_{a}^{b}{{{u}^{2}}dx}+\frac{1}{4}\int\limits_{a}^{b}{{{f}^{2}}dx}.
\end{equation}
Substituting in this inequality $a$ and $b$ for bounds of $E$ we obtain
\begin{equation}\label{teo1}
\partial_{0t}^{\alpha }{{\left\| u \right\|}^{2}}\le 4{{\left\| u \right\|}^{2}}+\frac{1}{4}{{\left\| f \right\|}^{2}},
\end{equation}
where ${{\left\| u \right\|}^{2}}=\int\limits_{0}^{1}{{{u}^{2}}\left( x,t \right)dx}$ and ${{\left\| f \right\|}^{2}}=\int\limits_{0}^{1}{{{f}^{2}}\left( x,t \right)dx}.$
From analogue of Grönwall–Bellman inequality \cite{Ali1}, we get the following relation:
\begin{equation}\label{te02k}
{{\left\| u \right\|}^{2}}\le \left\| u \right\|^{2}\cdot {{E}_{\alpha }}\left( 4{{t}^{\alpha }} \right)+\frac{\Gamma \left( \alpha  \right)}{4}{{E}_{\alpha ,\alpha }}\left( 4{{t}^{\alpha }} \right)J_{0t}^{\alpha }\left\| f \right\|,
\end{equation}
where ${{E}_{\alpha }}\left( z \right)=\sum\limits_{n=0}^{\infty }{\frac{{{z}^{n}}}{\Gamma \left( \alpha n+1 \right)}}$  and ${{E}_{\alpha ,\mu }}\left( z \right)=\sum\limits_{n=0}^{\infty }{\frac{{{z}^{n}}}{\Gamma \left( \alpha n+\mu  \right)}}$ are the Mittag-Leffler functions.
The uniqueness of solution follows from (\ref{te02k}).

\begin{theorem}\label{yagonalik2}
The Problem 2 has at most one regular solution.
\end{theorem}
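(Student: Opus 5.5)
We follow the energy method used for Theorem~\ref{yagona}, now on the half-line $E_2$. Suppose $u_1,u_2$ are two regular solutions of Problem 2 and set $u=u_1-u_2$. Then $u$ satisfies the homogeneous equation (\ref{tenglama}) with $f\equiv0$, the zero initial condition $u(x,0)=0$, and the homogeneous boundary conditions $u(0,t)=u_x(0,t)=0$.

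\textbf{Step 1: the energy estimate on a finite interval.} Apply the general estimate (\ref{umumtsiz}) on $[0,R]$ with $f=0$:
\[
\partial_{0t}^{\alpha}\int_0^R u^2\,dx\le \bigl(2uu_{xx}-u_x^2\bigr)\Big|_{x=R}-\bigl(2uu_{xx}-u_x^2\bigr)\Big|_{x=0}+4\int_0^R u^2\,dx .
\]
This step is in fact sharper than stated: for $f=0$ the integration by parts gives an identity with no $4\int u^2$ term. At $x=0$ both $u$ and $u_x$ vanish, so the boundary term there is zero.

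\textbf{Step 2: passing to $R\to\infty$.} The remaining term at $x=R$ is the main obstacle. The class $W_2$ does not explicitly prescribe decay at infinity, so I will take a regular solution to mean one for which $u$, $u_x$, $u_{xx}$ tend to $0$ as $x\to+\infty$ uniformly in $t\in[0,T]$, and $u\in L^2(0,\infty)$. These are the conditions under which the potentials constructed below are solutions, and the exponential bound (\ref{baho}) supplies them for those potentials.

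Under this assumption $\bigl(2uu_{xx}-u_x^2\bigr)(R,t)\to0$. The sign is also favourable: $-u_x^2(R,t)\le0$, so only the term $2uu_{xx}(R,t)$ needs to vanish. Letting $R\to\infty$ in Step 1, with the fractional derivative commuting with the limit by uniform convergence, yields
\[
\partial_{0t}^{\alpha}\|u\|^2\le 4\|u\|^2,\qquad \|u\|^2=\int_0^\infty u^2(x,t)\,dx .
\]

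\textbf{Step 3: Gronwall-type conclusion.} Apply the fractional Gr\"onwall--Bellman analogue from \cite{Ali1} exactly as in (\ref{te02k}), with $f=0$ and initial value $\|u(\cdot,0)\|^2=0$. This gives $\|u(\cdot,t)\|^2\le 0$, hence $\|u(\cdot,t)\|^2=0$ for all $t\in[0,T]$.

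Since $u$ is continuous, it follows that $u\equiv0$ in $\overline{E_2}$, i.e.\ $u_1=u_2$, which proves uniqueness.
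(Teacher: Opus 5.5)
Your proposal is correct and follows the paper's proof: the energy inequality (\ref{umumtsiz}) with $a=0$, $b=+\infty$ and the boundary terms discarded, then the fractional Gr\"onwall--Bellman analogue; you additionally make explicit the decay at infinity and the zero initial energy, which the paper uses only tacitly. The one loose step is interchanging $\partial_{0t}^{\alpha}$ with $R\to\infty$, which you can avoid by first applying $J_{0t}^{\alpha}$ on $[0,R]$ to get $\int_0^R u^2\,dx\le J_{0t}^{\alpha}\bigl[2uu_{xx}(R,\cdot)\bigr](t)$ and only then letting $R\to\infty$.
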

\textbf{Proof.}
Substituting for $a$ and $b$ for $0$ and $+\infty$ respectively in the inequality (\ref{umumtsiz}), we have
\begin{equation}
\partial_{0t}^{\alpha }\int\limits_{0}^{+\infty}{{{u}^{2}}dx}\le \left. \left( 2u{{u}_{xx}}-{{\left( {{u}_{x}} \right)}^{2}} \right) \right|_{a}^{b}+4\int\limits_{0}^{+\infty}{{{u}^{2}}dx}+\frac{1}{4}\int\limits_{0}^{+\infty}{{{f}^{2}}dx}.
\end{equation}
Hence we get
\begin{equation}
\partial_{0t}^{\alpha }{{\left\| u \right\|}^{2}}\le 4{{\left\| u \right\|}^{2}}+\frac{1}{4}{{\left\| f \right\|}^{2}}.
\end{equation}
where ${{\left\| u \right\|}^{2}}=\int\limits_{0}^{+\infty}{{{u}^{2}}\left( x,t \right)dx}$ and ${{\left\| f \right\|}^{2}}=\int\limits_{0}^{+\infty}{{{f}^{2}}\left( x,t \right)dx}.$
From analogue of Grönwall–Bellman inequality, we get the following relation:
\begin{equation}\label{te02}
{{\left\| u \right\|}^{2}}\le \left\| u \right\|^{2}\cdot {{E}_{\alpha }}\left( 4{{t}^{\alpha }} \right)+\frac{\Gamma \left( \alpha  \right)}{4}{{E}_{\alpha ,\alpha }}\left( 4{{t}^{\alpha }} \right)J_{0t}^{\alpha }\left\| f \right\|.
\end{equation}
The uniqueness of solution follows from (\ref{te02}).
\begin{theorem}
The Problem 3 has at most one regular solution.
\end{theorem}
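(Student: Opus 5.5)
We follow the same energy-integral scheme as in Theorems \ref{yagona} and \ref{yagonalik2}. Suppose $u^{(1)},u^{(2)}\in W_3$ are two regular solutions of Problem 3 with the same data $f,\psi$. Their difference $u=u^{(1)}-u^{(2)}$ is a regular solution of the homogeneous equation (\ref{tenglama}) with $f\equiv0$, the initial condition $u(x,0)=0$, and the homogeneous boundary condition $u_{xx}(0,t)=0$. It suffices to show that $u\equiv 0$ in $E_3$.

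First we apply the general estimate (\ref{umumtsiz}) on the interval $(a,b)=(-\infty,0)$ with $f\equiv 0$, which gives
$$\partial_{0t}^{\alpha}\int\limits_{-\infty}^{0}u^2\,dx\le \left.\left(2uu_{xx}-(u_x)^2\right)\right|_{-\infty}^{0}+4\int\limits_{-\infty}^{0}u^2\,dx .$$
The boundary term is where the choice of boundary data for Problem 3 enters. At $x=0$ the condition $u_{xx}(0,t)=0$ kills $2uu_{xx}$, leaving only $-u_x^2(0,t)\le 0$. This sign is favourable, and it is the reason a single condition at $x=0$ suffices on the left half-line, whereas on the right half-line two conditions were needed. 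At $x=-\infty$ we use that a regular solution, by definition, satisfies $u,u_x,u_{xx}\to0$ as $x\to-\infty$ fast enough that $u(\cdot,t)\in L^2(-\infty,0)$ and the integrations by parts behind (\ref{umumtsiz}) are legitimate. Hence the boundary term is nonpositive, and we obtain
$$\partial_{0t}^{\alpha}\|u\|^2\le 4\|u\|^2,\qquad \|u\|^2=\int\limits_{-\infty}^{0}u^2(x,t)\,dx .$$

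Next we apply the fractional Grönwall–Bellman analogue from \cite{Ali1}, exactly as in (\ref{te02k}). Since $f\equiv0$, this gives $\|u(\cdot,t)\|^2\le \|u(\cdot,0)\|^2E_\alpha(4t^\alpha)$; note that the first term on the right of (\ref{te02k}) should be read with the initial value $\|u(\cdot,0)\|^2$. The zero initial condition then gives $\|u(\cdot,t)\|=0$ for all $t\in(0,T]$. By continuity $u\equiv0$ in $E_3$, which proves uniqueness.

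The main obstacle is the behaviour at $x=-\infty$. We must ensure that $\int_{-\infty}^0u^2dx$ is finite and that the products $uu_{xx}$ and $u_x^2$ vanish at $-\infty$. We would build this into the notion of a regular solution, namely boundedness of $u,u_x,u_{xx}$ with decay at infinity. If that is not available, we would first work on $(-R,0)$ and let $R\to\infty$, controlling the term $(2uu_{xx}-u_x^2)(-R,t)$ by the assumed decay. A secondary point is justifying the inequality (\ref{alitengsizlik}) for $u$ in the class $W_3$, which requires enough regularity in $t$ for the Caputo derivative of $\int u^2dx$ to exist. We would take this as part of the regularity assumption, just as was done for Problems 1 and 2.
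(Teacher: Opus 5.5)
Your argument is correct and is essentially the paper's own proof. The paper only says the proof is similar to Theorems \ref{yagona} and \ref{yagonalik2}, i.e.\ apply (\ref{umumtsiz}) on $(-\infty,0)$ and then the fractional Gronwall--Bellman inequality with zero data. Your version makes explicit three points the paper leaves implicit: $u_{xx}(0,t)=0$ leaves only the nonpositive boundary term $-u_x^2(0,t)$; decay of $u,u_x,u_{xx}$ at $-\infty$, which is not contained in $W_3$, must be assumed; and the first term in (\ref{te02k}) has to be read with $\|u(\cdot,0)\|^2$.
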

The proof of this theorem is similar to the previous ones.

\subsection{Existence of solutions}
\begin{theorem}\label{mavjud}
If ${{\varphi }_{j}}\left( t \right)\in C^1\left[ 0,T \right]$, ($j=1,2,3$) and $f\left( x,t \right)\in {{C}^{2}}(\overline{E_1})$, then the Problem 1 has a unique regular solution.
\end{theorem}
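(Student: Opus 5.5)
Uniqueness is already given by Theorem~\ref{yagona}, so only existence needs proof. I will build the solution by the method of potentials, as a sum of the volume potential and layer potentials placed on the two ends of the interval:
\[
u(x,t)=w_6(x,t)+\int_0^t G_\alpha^{2\alpha/3}(x,t-\eta)\tau_1(\eta)\,d\eta+\int_0^t V_\alpha^{2\alpha/3}(x,t-\eta)\tau_2(\eta)\,d\eta+\int_0^t G_\alpha^{2\alpha/3}(x-1,t-\eta)\tau_3(\eta)\,d\eta .
\]
Here $w_6$ is taken with $[a,b]=[0,1]$, and $\tau_1,\tau_2,\tau_3\in C[0,T]$ are unknown densities. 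The kernels are chosen by the side of the singular point on which $E_1$ lies. Near $x=0$ we have $x>0$, where both $G$ (its $x>0$ branch) and $V$ are defined. Near $x=1$ we have $x<1$, i.e.\ the $x<0$ branch of $G$. The count also matches: two data at the left end and one at the right end. By Lemma~1 and Lemma~5, $u$ satisfies (\ref{tenglama}) in $E_1$ and the initial condition (\ref{boshshart}).

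Next I will impose the boundary conditions (\ref{chegshart}) and (\ref{shart}). The traces of $u$ and $u_x$ at $x=0$ and of $u$ at $x=1$ coming from the terms with $\tau_1,\tau_2$ at their own endpoint are weakly singular Abel-type integrals. By the scaling of the kernels, $G_\alpha^{2\alpha/3}(0,s)$ and $V_\alpha^{2\alpha/3}(0,s)$ behave like $c\,s^{2\alpha/3-1}$, and their $x$-derivatives at $0^+$ like $c'\,s^{\alpha/3-1}$. The cross terms (the $x=1$ potential evaluated at $x=0$, and conversely) have smooth kernels, because (\ref{baho}) gives exponential decay of the Wright function, which kills the time singularity. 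This leads to a system of the form
\[
c_{11}J^{2\alpha/3}_{0t}\tau_1+c_{12}J^{2\alpha/3}_{0t}\tau_2+K_1\tau=\tilde\varphi_1,\quad
d_{11}J^{\alpha/3}_{0t}\tau_1+d_{12}J^{\alpha/3}_{0t}\tau_2+K_2\tau=\tilde\varphi_2,\quad
e\,J^{2\alpha/3}_{0t}\tau_3+K_3\tau=\tilde\varphi_3,
\]
up to Gamma-factor normalisations. Here the $K_j$ have continuous kernels, and $\tilde\varphi_j=\varphi_j-w_6|_{\text{boundary}}$ (and $\partial_x w_6$ for $j=2$). I will apply the Caputo operators $\partial^{2\alpha/3}_{0t}$ and $\partial^{\alpha/3}_{0t}$ to the appropriate rows. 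This turns the system into a Volterra system of the second kind with weakly singular kernels. The hypothesis $\varphi_j\in C^1[0,T]$, together with $\varphi_j(0)=0$ (which I assume as compatibility with (\ref{boshshart})), makes the right-hand sides continuous. The hypothesis $f\in C^2(\overline{E_1})$ makes the traces of $w_6$ and $\partial_x w_6$ regular enough for this step.

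The main obstacle is showing that the $2\times2$ leading matrix for $(\tau_1,\tau_2)$ at $x=0$ is invertible. Its entries are the values at $0$ and the first derivatives at $0$ of $\operatorname{Re}$ and $\operatorname{Im}$ of $e^{2\pi i/3}\phi(-\alpha/3,2\alpha/3;e^{2\pi i/3}z)$. From the series (\ref{Wright}) these are explicit: $1/\Gamma(2\alpha/3)$ times $\cos$ or $\sin$ of $2\pi/3$, and $1/\Gamma(\alpha/3)$ times $\cos$ or $\sin$ of $4\pi/3$. Because the two rows carry different fractional orders, what I actually need is nonvanishing of a $\sin$-type determinant, $\sin(2\pi/3)\cdot(\ldots)\neq0$, after the rows are rescaled. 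I will verify this explicitly. Once it holds, the standard theory of Volterra equations with weakly singular kernels gives a unique continuous solution $(\tau_1,\tau_2,\tau_3)$, obtained by iteration.

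Finally I will check regularity, i.e.\ that $u\in W_1$. Continuity up to $x=0,1$ follows from the jump relations, analogous to Lemma~2 but for $u$ and $u_x$, where the potentials are continuous. Continuity at $t=0$ follows from Lemma~1(ii). Combined with Theorem~\ref{yagona}, this gives the unique regular solution.
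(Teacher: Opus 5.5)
Your proposal is correct and follows essentially the same route as the paper. Both use the same ansatz ($G$- and $V$-potentials at $x=0$, a $G$-potential at $x=1$, plus the volume potential), reduce the boundary conditions to Abel-type equations, and apply $\partial_{0t}^{2\alpha/3}$, $\partial_{0t}^{\alpha/3}$ to get a second-kind Volterra system with bounded cross kernels, which is then solved by the resolvent. Your leading $2\times2$ matrix is exactly the paper's, with rows $(\tfrac13,\tfrac{\sqrt3}{6})$ and $(\tfrac13,-\tfrac{\sqrt3}{6})$, and determinant $-\sqrt3/9\neq 0$. Your compatibility assumption is one the paper uses silently but never states: $\varphi_1(0)=\varphi_3(0)=0$ is in fact forced by $u\in C(\overline{E_1})$ and $u(x,0)=0$.
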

\textbf{Proof.} We look for solution in the form
$$u(x,t)=\int\limits_{0}^{t}{G_{\alpha }^{2\alpha /3}(x,t-\tau )\alpha (\tau )d\tau }+\int\limits_{0}^{t}{V_{\alpha }^{2\alpha /3}(x,t-\tau )\beta (\tau )d\tau }+$$
\begin{equation}\label{yechim}
+\int\limits_{0}^{t}{G_{\alpha }^{2\alpha /3}(x-1,t-\tau )\gamma (\tau )d\tau }+F(x,t),
\end{equation}
where
$$F(x,t)=\int\limits_{0}^{t}{\int_{0}^{1}{G_{\alpha }^{2\alpha /3}(x-\xi ,t-\tau )f(\xi ,\tau )d\xi }d\tau }.$$
Using  Lemma 1 it is easy to show that solution (\ref{yechim}) satisfies equation (\ref{tenglama}) and initial condition (\ref{boshshart}). Satisfying boundary conditions (\ref{chegshart}) we get

$$\int_{0}^{t}{\frac{\alpha \left( \tau  \right)+\frac{\sqrt{3}}{2}\beta \left( \tau  \right)}{3\Gamma \left( 2\alpha /3 \right){{\left( t-\tau  \right)}^{1-2\alpha /3}}}d\tau }={{\varphi }_{1}}\left( t \right)-\int_{0}^{t}{G_{\alpha }^{2\alpha /3}\left( -1,t-\tau  \right)\gamma \left( \tau  \right)d\tau }-F\left( 0,t \right)$$
and
$$\int_{0}^{t}{\frac{\alpha \left( \tau  \right)-\frac{\sqrt{3}}{2}\beta \left( \tau  \right)}{3\Gamma \left( \alpha /3 \right){{\left( t-\tau  \right)}^{1-\alpha /3}}}d\tau }+\frac{\partial }{\partial x}\int_{0}^{t}{G_{\alpha }^{\alpha /3}\left( -1,t-\tau  \right)\gamma \left( \tau  \right)d\tau }={{\varphi }_{2}}\left( t \right)-\frac{\partial }{\partial x}F\left( 0,t \right).$$
Taking into account definitions (\ref{hosila}), (\ref{integral})  and above equalities we obtain
\begin{equation}\label{syst1}
\frac{1}{3}\alpha \left( t \right)+\frac{\sqrt{3}}{6}\beta \left( t \right)+\int_{0}^{t}{{{K}_{1}}\left( t-\tau  \right)\gamma \left( \tau  \right)d\tau }=\partial_{0t}^{2\alpha /3}\left( {{\varphi }_{1}}\left( t \right)-F\left( 0,t \right) \right)
\end{equation}and
\begin{equation}\label{syst2}
\frac{1}{3}\alpha \left( t \right)-\frac{\sqrt{3}}{6}\beta \left( t \right)+\int_{0}^{t}{{{K}_{2}}\left( t-\tau  \right)\gamma \left( \tau  \right)d\tau }=\partial_{0t}^{\alpha /3}\left( {{\varphi }_{2}}\left( t \right)-\frac{\partial }{\partial x}F\left( 0,t \right) \right)
\end{equation}
where ${{K}_{1}}\left( t-\tau  \right)=\partial_{0t}^{2\alpha /3}G_{\alpha }^{2\alpha /3}\left( -1,t-\tau  \right)$ and ${{K}_{2}}\left( t-\tau  \right)=\partial_{0t}^{\alpha /3}\frac{\partial }{\partial x}G_{\alpha }^{\alpha /3}\left( -1,t-\tau  \right).$
Analogously, satisfying condition (\ref{shart}) and using the properties of fractional derivatives we get
\begin{equation}\label{syst3}
\int\limits_{0}^{t}{{{K}_{3}}\left( t-\tau  \right)\alpha \left( \tau  \right)d\tau }+\int\limits_{0}^{t}{{{K}_{4}}\left( t-\tau  \right)\beta \left( \tau  \right)d\tau }+\frac{1}{3}\gamma \left( t \right)=\partial_{0t}^{2\alpha /3}\left( {{\varphi }_{3}}\left( t \right)-F\left( 1,t \right) \right)
\end{equation}
where ${{K}_{3}}\left( t-\tau  \right)=\partial_{0t}^{2\alpha /3}G_{\alpha }^{2\alpha /3}\left( 1,t-\tau  \right)$  and ${{K}_{4}}\left( t-\tau  \right)=\partial_{0t}^{2\alpha /3}V_{\alpha }^{2\alpha /3}\left( 1,t-\tau  \right)$.

Above we use the asymptotic estimate (\ref{baho}), and get
\begin{equation}\label{kbaho}|K_j(t-\tau)|<C,\,\,\,j=1,2,3,4.
\end{equation}
So we obtained following system of integral equations (\ref{syst1}) - (\ref{syst3}) with respect to unknowns $\Phi(t)={{\left( \alpha \left( t \right),\beta \left( t \right),\gamma \left( t \right) \right)}^{T}}$
\begin{equation}\label{syst}	
\Phi (t)=\int\limits_{0}^{t}{K(t-\tau)\Phi (\tau)d\tau}+F(t)
\end{equation}
where $$K(t-\tau)=-{{\left( \begin{matrix}
   \frac{1}{3} & \frac{\sqrt{3}}{6} & 0  \\
   \frac{1}{3} & -\frac{\sqrt{3}}{6} & 0  \\
   0 & 0 & 1  \\
\end{matrix} \right)}^{-1}}\left( \begin{matrix}
   0 & 0 & {{K}_{1}}\left( t-\tau  \right)  \\
   0 & 0 & {{K}_{2}}\left( t-\tau  \right)  \\
   {{K}_{3}}\left( t-\tau  \right) & {{K}_{4}}\left( t-\tau  \right) & 0  \\
\end{matrix} \right)$$ and
$$F\left( t \right)={{\left( \begin{matrix}
   \frac{1}{3} & \frac{\sqrt{3}}{6} & 0  \\
   \frac{1}{3} & -\frac{\sqrt{3}}{6} & 0  \\
   0 & 0 & 1  \\
\end{matrix} \right)}^{-1}}\left( \begin{matrix}
   \partial_{0t}^{2\alpha /3}\left( {{\varphi }_{1}}\left( t \right)-F\left( 0,t \right) \right)  \\
   \partial_{0t}^{\alpha /3}\left( {{\varphi }_{2}}\left( t \right)-F\left( 0,t \right) \right)  \\
   \partial_{0t}^{2\alpha /3}\left( {{\varphi }_{3}}\left( t \right)-F\left( 1,t \right) \right)  \\
\end{matrix} \right).$$
From (\ref{kbaho}) follows that integral equations system (\ref{syst}) has unique solutions. This solution has a form
\begin{equation}\label{yechim1}
\Phi \left( t \right)=\int\limits_{0}^{t}{R\left( t-\tau  \right)K\left( t-\tau  \right)d\tau }+\tilde{F}\left( t \right),
\end{equation}
where $R(t-\tau)$  is the resolvent of system (\ref{syst}).

\begin{theorem}\label{mavjud2}

If ${{\psi }_{j}}\left( t \right)\in C^1\left[ 0,T \right]$, ($j=1,2$) and $f\left( x,t \right)\in {{C}^{2}}(\overline{E_2})$, then the Problem 2 has a unique regular solution on the form
\begin{equation}\label{yechim2}
u(x,t)=\int\limits_{0}^{t}{G_{\alpha }^{2\alpha /3}(x,t-\tau )\lambda (\tau )d\tau }+\int\limits_{0}^{t}{V_{\alpha }^{2\alpha /3}(x,t-\tau )\mu (\tau )d\tau }+Q(x,t),
\end{equation}
where
$$Q(x,t)=\int\limits_{0}^{t}{\int_{0}^{+\infty}{G_{\alpha }^{2\alpha /3}(x-\xi ,t-\tau )f(\xi ,\tau )d\xi }d\tau },$$
\begin{equation}\label{lambda2}
\lambda(t)=\frac{3}{2}\partial_{0t}^{2\alpha /3}\left( {{\psi }_{1}}\left( t \right)-Q\left( 0,t \right) \right)+\frac{3}{2}\partial_{0t}^{\alpha /3}\left( {{\psi }_{2}}\left( t \right)-\frac{\partial }{\partial x}Q\left( 0,t \right) \right)
\end{equation} and
\begin{equation}\label{myu2}
\mu(t)=\sqrt{3}\partial_{0t}^{2\alpha /3}\left( {{\psi }_{1}}\left( t \right)-Q\left( 0,t \right) \right)-\sqrt{3}\partial_{0t}^{\alpha /3}\left( {{\psi }_{2}}\left( t \right)-\frac{\partial }{\partial x}Q\left( 0,t \right) \right).
\end{equation}
\end{theorem}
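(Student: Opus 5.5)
Uniqueness is already given by Theorem~\ref{yagonalik2}, so what remains is existence: we must show that the representation (\ref{yechim2}), with $\lambda,\mu$ given by (\ref{lambda2}) and (\ref{myu2}), is a regular solution. The plan follows the proof of Theorem~\ref{mavjud}. Since no boundary sits at the far end of $E_2$, there is no third density $\gamma$. The resulting system therefore has no convolution kernels and becomes purely algebraic.

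\emph{Step 1: the equation and the initial condition.} By Lemma~1 (with $a=0$), the two potentials with densities $\lambda$ and $\mu$ satisfy the homogeneous equation for $x>0$ and vanish as $t\to0+$. By the volume-potential lemma (Lemma~5, with $[a,b]$ replaced by $[0,+\infty)$), $Q$ satisfies the inhomogeneous equation and $Q(x,0)=0$. The extension to an infinite interval is harmless: the estimate (\ref{baho}) gives exponential decay of $G_\alpha^{2\alpha/3}(x-\xi,t-\tau)$ in $\xi$. Moreover, $f\in C^2(\overline{E_2})$ lets us move $x$-derivatives onto $f$ after integrating by parts, which justifies differentiating under the integral sign. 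Hence $u$ satisfies (\ref{tenglama}) and (\ref{boshshart1}).

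\emph{Step 2: the boundary conditions reduce to an algebraic system.} Let $x\to0+$. The values are $G_\alpha^{2\alpha/3}(0+,s)$ and $V_\alpha^{2\alpha/3}(0+,s)$, and their $x$-derivatives at $0+$. We compute them from the series (\ref{Wright}) at $z=0$, exactly as in the proof of Theorem~\ref{mavjud}. This yields kernels $c_1 s^{2\alpha/3-1}/\Gamma(2\alpha/3)$ and $c_2 s^{\alpha/3-1}/\Gamma(\alpha/3)$. The coefficients come from $\operatorname{Re}$ and $\operatorname{Im}$ of $e^{2\pi i/3}$ and $e^{4\pi i/3}$. The boundary conditions (\ref{chegshart1}) then become
\[
J_{0t}^{2\alpha/3}\Bigl(\tfrac13\lambda+\tfrac{\sqrt3}{6}\mu\Bigr)=\psi_1-Q(0,t),\qquad
J_{0t}^{\alpha/3}\Bigl(\tfrac13\lambda-\tfrac{\sqrt3}{6}\mu\Bigr)=\psi_2-Q_x(0,t).
\]
For the derivative we need continuity of the $x$-derivative of the single-layer potentials up to $x=0$. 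This is weaker than Lemma~2, whose jump phenomena occur only at the level of $\partial_x^2$. We obtain it by differentiating once under the integral sign and using the bound $s^{\alpha/3-1}$, which is integrable in $s$, together with dominated convergence.

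\emph{Step 3: inverting the system.} Apply $\partial_{0t}^{2\alpha/3}$ to the first identity and $\partial_{0t}^{\alpha/3}$ to the second. The Caputo derivative inverts the Riemann--Liouville integral here because the right-hand sides vanish at $t=0$: indeed $\psi_j(0)=0$ and $Q(0,0)=Q_x(0,0)=0$ by compatibility with $u(x,0)=0$. We get the algebraic system with matrix $\begin{pmatrix}1/3&\sqrt3/6\\1/3&-\sqrt3/6\end{pmatrix}$. Its inverse is $\begin{pmatrix}3/2&3/2\\ \sqrt3&-\sqrt3\end{pmatrix}$, which reproduces (\ref{lambda2}) and (\ref{myu2}).

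\emph{Regularity of the densities.} Since $\psi_j\in C^1[0,T]$ and $Q(0,\cdot)$, $Q_x(0,\cdot)$ are $C^1$ (using $f\in C^2$), the fractional derivatives of order less than $1$ are continuous on $[0,T]$. Hence $\lambda,\mu\in C[0,T]$ and Lemma~1 applies. We conclude $u\in W_2$, since $u_x$ is continuous on $\overline{E_2}$ by Step~2.

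\emph{Main obstacle.} The hardest step is Step~2. We must justify that the boundary traces of $u$ and $u_x$ at $x=0$ equal the stated Riemann--Liouville integrals with precisely the constants $\tfrac13\pm\tfrac{\sqrt3}{6}$. This requires controlling the Wright function near $0$ uniformly in $s$ while passing the limit under the integral. We would handle it as in Lemma~2: substitute $u=x/s^{\alpha/3}$ and apply dominated convergence with the bound (\ref{baho}).
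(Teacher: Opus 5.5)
Your proposal is correct and follows the same route as the paper. You represent $u$ by the $G$- and $V$-potentials plus $Q$, reduce the boundary conditions to the Abel equations $J_{0t}^{2\alpha/3}(\frac13\lambda+\frac{\sqrt3}{6}\mu)=\psi_1-Q(0,t)$ and $J_{0t}^{\alpha/3}(\frac13\lambda-\frac{\sqrt3}{6}\mu)=\psi_2-Q_x(0,t)$, invert them with Caputo derivatives, and solve the $2\times2$ system. Your version is cleaner than the paper's, whose second Abel equation carries the wrong kernel $\Gamma(2\alpha/3)$, $(t-\tau)^{1-2\alpha/3}$ and whose intermediate system has the signs swapped. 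You also rightly make explicit the compatibility conditions $\psi_1(0)=\psi_2(0)=0$ that the theorem tacitly needs.

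One small correction: $Q(0,\cdot)$ and $Q_x(0,\cdot)$ are not $C^1$ up to $t=0$ when $f(0,0)\neq0$, since their $t$-derivatives behave like $t^{\alpha-1}$ and $t^{2\alpha/3-1}$. They are, however, absolutely continuous with integrable singularities, and they vanish at $t=0$ directly from the definition of $Q$ (no compatibility argument is needed). So $\partial_{0t}^{2\alpha/3}Q(0,t)$ and $\partial_{0t}^{\alpha/3}Q_x(0,t)$ are still continuous, of order $t^{\alpha/3}$ near $0$, and your conclusion $\lambda,\mu\in C[0,T]$ stands.
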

\textbf{Proof.}
We look for solution in the form (\ref{yechim2}).
Using  Lemma (1) it is easy to show that solution (\ref{yechim2}) satisfies equation (\ref{tenglama}) and initial condition (\ref{boshshart1}). Satisfying boundary conditions (\ref{chegshart1}) we get

$$\int_{0}^{t}{\frac{\lambda(\tau)+\frac{\sqrt{3}}{2}\mu(\tau)}{3\Gamma \left( 2\alpha /3 \right){{\left( t-\tau  \right)}^{1-2\alpha /3}}}d\tau }={{\psi }_{1}}(t )-Q(0,t),$$
and
$$\int_{0}^{t}{\frac{\lambda(\tau)-\frac{\sqrt{3}}{2}\mu(\tau)}{3\Gamma \left( 2\alpha /3 \right){{\left( t-\tau  \right)}^{1-2\alpha /3}}}d\tau }={{\psi }_{2}}(t )-\frac{\partial}{\partial x}Q(0,t),$$

Taking into account definitions of fractional derivative and integral we obtain
\begin{equation}\label{sistema11}
\lambda(t)-\frac{\sqrt{3}}{2}\mu(t)=3\partial_{0t}^{2\alpha/3}\left({{\psi }_{1}}(t )-Q(0,t)\right)
\end{equation}
and
\begin{equation}\label{sistema12}
\lambda(t)+\frac{\sqrt{3}}{2}\mu(t)=3\partial_{0t}^{2\alpha/3}\left({{\psi }_{2}}(t )-\frac{\partial}{\partial x}Q(0,t)\right).
\end{equation}

So we obtained following system of equations with respect to unknowns

\begin{equation}\nonumber \label{sistemaa}
\begin{cases}
 \lambda(t)+\frac{\sqrt{3}}{2}\mu(t)=3\partial_{0t}^{2\alpha /3}\left( {{\psi }_{1}}\left( t \right)-Q\left( 0,t \right) \right) \\
 \lambda(t)-\frac{\sqrt{3}}{2}\mu(t)=3\partial_{0t}^{\alpha /3}\left( {{\psi }_{2}}\left( t \right)-\frac{\partial }{\partial x}Q\left( 0,t \right) \right). \\
\end{cases}
\end{equation}

Solving this system we get (\ref{lambda2}) and (\ref{myu2}) form of unknown functions.
Substituting unknown functions $\lambda(t)$ and $\mu(t)$ in (\ref{yechim2}) for above functions we have hole form of solution.

\begin{theorem}\label{mavjud3}
If $\psi\in C^1\left[ 0,T \right]$ and $f\left( x,t \right)\in {{C}^{2}}(\overline{E_3})$, then the Problem 3 has a unique regular solution on the form
\begin{equation}\label{yechim03}
u(x,t)=\frac{3}{2}\int\limits_{0}^{t}{G_{\alpha }^{2\alpha /3}(x,t-\tau )\left( \frac{{{\partial }^{2}}}{\partial {{x}^{2}}}R(x,\tau )-\psi \left( \tau  \right) \right)d\tau }+R(x,t)
\end{equation}
where
$$R(x,t)=\int\limits_{0}^{t}{\int_{-\infty}^{0}{G_{\alpha }^{2\alpha /3}(x-\xi ,t-\tau )f(\xi ,\tau )d\xi }d\tau }.$$
\end{theorem}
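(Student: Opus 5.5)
The plan is to look for the solution of Problem 3 as a single-layer potential of type $w_1$ with density supported at $a=0$, plus the volume potential $R$:
\[
u(x,t)=\int_0^t G_\alpha^{2\alpha/3}(x,t-\tau)\,\lambda(\tau)\,d\tau+R(x,t),\qquad x<0 .
\]
Only the kernel $G_\alpha^{2\alpha/3}$ is used, because $V_\alpha^{2\alpha/3}$ is defined only for $x>0$. On the left half-line only one boundary condition is imposed, which is consistent with a single unknown density.

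\textbf{Step 1 (equation and initial condition).}
\begin{itemize}
\item By Lemma 1(i), the first term satisfies the homogeneous equation for $x\neq 0$, and by Lemma 1(ii) it vanishes as $t\to0+$.
\item By Lemma 5, applied with $[a,b]$ replaced by $(-\infty,0]$, $R$ satisfies the inhomogeneous equation (\ref{tenglama}) and $R(x,0)=0$. Extending Lemma 5 to the half-line needs the exponential decay (\ref{baho}) of the kernel in $\xi$. For this I would also assume that $f$ is bounded on $\overline{E_3}$.
\item Hence $u$ satisfies (\ref{tenglama}) and (\ref{boshshart2}).
\end{itemize}
Membership in $W_3$ then reduces to continuity of $u_{xx}$ up to $x=0$, which comes out of Step 2.

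\textbf{Step 2 (boundary condition).} Differentiate twice in $x$ under the integral sign. The first term becomes exactly the potential $w_3$ of Lemma 2 with $a=0$ and $\tau_3=\lambda$. Since the domain is $x<0$, we need the limit $x\to 0-0$, which by Lemma 2 equals $\tfrac13\lambda(t)$. The boundary condition (\ref{chegshart2}) therefore becomes
\[
\tfrac13\lambda(t)+R_{xx}(0,t)=\psi(t).
\]
This is an algebraic relation, not an integral equation, so $\lambda(t)=3\bigl(\psi(t)-R_{xx}(0,t)\bigr)$.

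\textbf{Obstacle: the constant and sign.} This formula has to be compared with the stated form (\ref{yechim03}), whose density is $\tfrac32\bigl(R_{xx}-\psi\bigr)$. I would recheck the jump constant of Lemma 2 at this point. Lemma 2 gives $\tfrac13$ from the left and $-\tfrac23$ from the right, a total jump of $1$. The coefficient $\tfrac32$ together with the opposite sign corresponds to using the right-hand value $-\tfrac23$. So I would carefully verify which one-sided limit is relevant for the domain $E_3=\{x<0\}$ and how the factor $\tfrac13$ in the definition (\ref{fundamental1}) enters. Then I would fix the constant so that the boundary condition holds exactly, adjusting (\ref{yechim03}) if needed.

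\textbf{Step 3 (regularity of $R_{xx}(0,t)$).} I would show that $R_{xx}(0,\cdot)$ is continuous on $[0,T]$. The $x$-derivatives are moved onto $f$ by integrating by parts in $\xi$, using $\partial_x G(x-\xi)=-\partial_\xi G(x-\xi)$; this is where $f\in C^2$ is used. The boundary terms at $\xi=0$ are again potentials of the type in Lemma 1, and the assumption on $f$ controls them. With $\psi\in C^1$ this gives a continuous density $\lambda$, so Lemmas 1 and 2 apply.

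\textbf{Uniqueness.} This follows from the uniqueness theorem for Problem 3 proved above.
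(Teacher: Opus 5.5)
Your plan is the paper's own proof. It uses the same single-layer ansatz (the paper calls the density $\theta$), the same appeal to Lemma~1, and the same left-hand value $\tfrac13$ from Lemma~2. The paper's proof arrives at exactly your $\lambda(t)=3\bigl(\psi(t)-R_{xx}(0,t)\bigr)$.

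The mismatch you flag is real, and it is an error in the statement, not in your derivation. Commit to the corrected formula rather than leaving it open.
\begin{itemize}
\item The printed density $\tfrac32\bigl(R_{xx}(x,\tau)-\psi(\tau)\bigr)$ is what the right-hand value $-\tfrac23$ would produce. It also has $x$ where $0$ belongs.
\item Since $E_3$ lies to the left of $x=0$, $\tfrac13$ is the relevant constant.
\item Even reading $R_{xx}(x,\tau)$ as $R_{xx}(0,\tau)$, the printed density gives $u_{xx}(0-0,t)=\tfrac32R_{xx}(0,t)-\tfrac12\psi(t)$. This equals $\psi(t)$ only if $\psi\equiv R_{xx}(0,\cdot)$.
\end{itemize}
The paper's proof simply asserts that substituting $\theta$ yields (\ref{yechim03}). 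The representation actually established is
\[
u(x,t)=3\int_0^t G_\alpha^{2\alpha/3}(x,t-\tau)\bigl(\psi(\tau)-R_{xx}(0,\tau)\bigr)\,d\tau+R(x,t).
\]

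There is one genuine gap, which the paper shares: membership in $W_3$ does not simply ``come out of Step~2''. Lemma~2 gives a limit at each fixed $t>0$. The same dominated-convergence argument gives joint continuity of $u_{xx}$ along $\{0\}\times(0,T]$, but not at the corner $(0,0)$.

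Near the corner, the layer term is approximately
\[
\lambda(0)\,\alpha^{-1}\int_{|x|/t^{\alpha/3}}^{\infty}\phi(-\tfrac{\alpha}{3},0;-u)\,\tfrac{du}{u},
\]
whose value depends on the ratio $|x|/t^{\alpha/3}$ and ranges from $\tfrac13\lambda(0)$ down to $0$. Since $R_{xx}(0,t)\to0$, $u_{xx}$ is continuous at $(0,0)$ only if $\psi(0)=0$.

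This condition is also necessary. The requirements $u(x,0)=0$ and $u_{xx}\in C(\overline{E_3})$ force $u_{xx}(x,0)=0$ for $x<0$, hence $\psi(0)=u_{xx}(0,0)=0$. So the existence claim needs the compatibility hypothesis $\psi(0)=0$, or else a solution class that does not require continuity of $u_{xx}$ at the corner.

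Three parts of your plan are sensible refinements that the paper skips: using Lemma~5 rather than Lemma~1 for $R$, assuming $f$ is bounded on the half-line, and the regularity argument for $R_{xx}(0,\cdot)$ in Step~3.
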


\textbf{Proof.} We look for solution in the form

\begin{equation}\label{yechim3}
u(x,t)=\int\limits_{0}^{t}{G_{\alpha }^{2\alpha /3}(x,t-\tau )\theta (\tau )d\tau }+R(x,t).
\end{equation}

Using  Lemma (1) it is easy to show that solution (\ref{yechim3}) satisfies equation (\ref{tenglama}) and initial condition (\ref{boshshart2}). Let us satisfy boundary condition (\ref{chegshart2})
$$
\underset{x\to -0}{\mathop{\lim }}\,\frac{\partial^2}{\partial x^2}u(x,t)=\underset{x\to -0}{\mathop{\lim }}\,\int\limits_{0}^{t}\frac{\partial^2}{\partial x^2}{G_{\alpha }^{2\alpha /3}(x,t-\tau )\theta (\tau )d\tau }+\frac{\partial^2}{\partial x^2}R(0,t)=\psi(t).
$$
Using Lemma (2) we get
$$\theta(t)=3\left(\psi(t)-\frac{\partial^2}{\partial x^2}R(0,t) \right).
$$
Substituting $\theta(\tau)$ in  \eqref{yechim3}  we get solution in the form (\ref{yechim03}). From the conditions of the theorem and the properties of potentials given above, it follows that the function given by (\ref{yechim03}) is a regular solution of the Problem 3.

In this work, we studied well-posedness of the initial boundary problem for time-fractional Airy equation on the different bounded and semi-infinite intervals. A-priory estimates for the solutions of considered problems are obtained using analogue of Grönwall–Bellman inequality and uniqueness of solutions are proved. We investigated the properties of potentials and solutions of problems are founded in the form of potentials sum.

\end{document}